\title{Numerical evaluation of two and three parameter Mittag--Leffler functions\thanks{This work has been supported under the GNCS - INdAM Project 2014.}} 
\author{
Roberto Garrappa\thanks{Universit\`a degli Studi di Bari ``Aldo Moro'', Dipartimento di Matematica  Via E. Orabona n.4, 70125 Bari, Italy (\email{roberto.garrappa@uniba.it})}}
\begin{document}
\maketitle
\slugger{sinum}{xxxx}{xx}{x}{x--x}%slugger should be set to mms, siap, sicomp, sicon, sidma, sima, simax, sinum, siopt, sisc, or sirev

\begin{abstract}
The Mittag-Leffler (ML) function plays a fundamental role in fractional calculus but very few methods are available for its numerical evaluation. In this work we present a method for the efficient computation of the ML function based on the numerical inversion of its Laplace transform (LT): an optimal parabolic contour is selected on the basis of the distance and the strength of the singularities of the LT, with the aim of minimizing the computational effort and reduce the propagation of errors. Numerical experiments are presented to show accuracy and efficiency of the proposed approach. The application to the three parameter ML (also known as Prabhakar) function is also presented.
\end{abstract}

\begin{keywords}
Mittag--Leffler function, Laplace transform, trapezoidal rule, fractional calculus, Prabhakar function, special function.
\end{keywords}

\begin{AMS}
33E12, 44A10, 65D30, 33F05, 26A33, 
\end{AMS}

\pagestyle{myheadings}
\thispagestyle{plain}
\markboth{R. GARRAPPA}{NUMERICAL EVALUATION OF ML FUNCTIONS}

\newtheorem{remark}[theorem]{Remark}
\def\Rset{{\mathbb{R}}}
\def\Cset{{\mathbb{C}}}
\def\Zset{{\mathbb{Z}}}
\def\Nset{{\mathbb{N}}}
\def\Arg{\mathop{\rm Arg}}

\section{Introduction}

The Mittag--Leffler (ML) function was introduced, at the beginning of the twentieth century, by the Swedish mathematician Magnus Gustaf Mittag--Leffler \cite{Mittag-Leffler1902,MittagLeffler1904} while studying summation of divergent series. Extensions to two \cite{Wiman1905} and three \cite{Prabhakar1971} parameters of the original single parameter function were successively considered; all these functions can be regarded as special instances of the generalized hypergeometric function investigated by Fox \cite{Fox1928} and Wright \cite{Wright1935}. 

Until the 1960s, few authors (e.g., \cite{HilleTamarkin1930}) recognized the importance of the ML function in fractional calculus and, in particular, in describing anomalous processes with hereditary effects \cite{AlBassam1965,CaputoMainardi1971,CaputoMainardi2007,DzherbashyanNersesyan1968}. For an historical outline and a review of the main properties of the ML function we refer to \cite{HauboldMathaiSaxena2011,MainardiGorenflo2000} and to the recent monograph \cite{GorenfloKilbasMainardiRogosin2014}.

%For long times very few authors (e.g., \cite{HilleTamarkin1930}) have recognized the importance of the ML function until, in the 1960s, the increasing interest on fractional calculus led to a renewed interest and to understand the central role played by the ML function in describing anomalous processes and hereditary effects and the connections with fractional calculus \cite{AlBassam1965,CaputoMainardi1971,CaputoMainardi2007,DzherbashyanNersesyan1968}; for an historical outline and a review of the main properties of the ML function we refer to \cite{HauboldMathaiSaxena2011,MainardiGorenflo2000} or to the recent monograph \cite{GorenfloKilbasMainardiRogosin2014}.

For any argument $z \in \Cset$, the ML function with two parameters $\alpha,\beta \in \Cset$, with $\Re(\alpha)>0$, is defined by means of the series expansion
\begin{equation}\label{eq:ClassicalML}
	E_{\alpha,\beta}(z) = \sum_{k=0}^{\infty} \frac{z^{k} }{\Gamma(\alpha k + \beta)} ,
\end{equation}
where $\Gamma(z) = \int_{0}^{\infty} t^{z-1} e^{-t} dt$ is the Euler's gamma function; since the integral representation of $\Gamma(z)$ holds only for $\Re(z)>0$, the extension to the half--plane $\Re(z) \le 0$, with $z \not\in \{0,-1,-2,\dots\}$, is accomplished by means of the relationship $\Gamma(z+n)=z(z+1)\cdots(z+n-1)\Gamma(z)$, $n \in \Nset$, \cite{KilbasSrivastavaTrujillo2006,Mainardi2010}. 

As a special case, the ML function with one parameter is obtained for $\beta=1$, i.e. $E_{\alpha}(z)=E_{\alpha,1}(z)$, whilst the generalization to a third parameter $\gamma$ 
\begin{equation}\label{eq:PrabhakarFunction}
	E_{\alpha,\beta}^{\gamma} (z) = \frac{1}{\Gamma(\gamma)}\sum_{k=0}^{\infty} \frac{\Gamma(\gamma+k) z^{k}}{k!\Gamma(\alpha k + \beta)}
\end{equation}
is recently receiving an increasing attention due to the applications in modeling polarization processes in anomalous or inhomogeneous materials \cite{OliveiraMainardiVaz2011}. 

In this work we restrict our attention to real parameters $\alpha$, $\beta$ and $\gamma$, with $\alpha>0$ and $\gamma>0$, since they are of more practical interest. 

%\[
%	E_{1,1}(z) = e^{z} , \quad
%	%E_{2,1}(z^{2}) = \cosh(z) , \quad
%	E_{2,1}(-z^{2}) =  \cos(z) , \quad
%	E_{\frac{1}{2},1}(\pm z^{1/2}) = e^{z} \textrm{erfc}(\mp z^{1/2}) ,
%\]

With the exception of a few special cases in which the ML function can be represented in terms of other elementary and special functions, as for instance $E_{1,1}(z) = e^{z}$, $E_{2,1}(z^{2}) = \cosh(z)$, $E_{2,1}(-z^{2}) = \cos(z)$ and $E_{\frac{1}{2},1}(\pm z^{1/2}) = e^{z} \textrm{erfc}(\mp z^{1/2})$, most of the programming languages do not provide built-in functions for the ML function. 

Although it is theoretically possible to evaluate $E_{\alpha,\beta}(z)$ and $E_{\alpha,\beta}^{\gamma}(z)$ by truncating the series in (\ref{eq:ClassicalML}) and (\ref{eq:PrabhakarFunction}), in the majority of the applications this is not advisable: with arguments having moderate or large modulus $|z|$ the convergence of the series is very slow and involves an exceedingly large amount of computation; moreover, a large number of terms in the series can significantly grow before decreasing, thus generating overflow or numerical cancellation unless variable precision arithmetic is used. In finite precision arithmetic (which is the natural environment for scientific computing) the use of (\ref{eq:ClassicalML}) and (\ref{eq:PrabhakarFunction}) is, therefore, confined just to small arguments.

Very few methods have been presented so far in the literature. The sophisticated algorithm described in \cite{GorenfloLoutchkoLuchko2002} uses different techniques to evaluate the ML function and its derivative in different parts of the complex plane. Other approaches based on mixed techniques (Taylor series, asymptotic series, and integral representations) were discussed in \cite{HilferSeybold2006,HilferSeybold2008}. The only existing Matlab code \cite{PodlubnyKacenak2012} (which implements some of the ideas introduced in \cite{GorenfloLoutchkoLuchko2002}) shows a great variability in the amount of computation required to achieve a prescribed accuracy and in some regions of the complex plane turns out to be poorly accurate. 

The recent introduction \cite{GarrappaMoretPopolizio2014_JCP,GarrappaPopolizio2011_CAMWA,Moret2013} of new methods for fractional differential equations involving a large number of evaluations of the ML function, also with matrix arguments \cite{MoretNovati2011}, motivates the investigation of different techniques to perform the computation, over the whole complex plane, in an accurate and fast way.

In this paper we consider an approach based on the inversion of the Laplace transform (LT) in which a quadrature rule is applied on a suitable complex contour, namely a parabola. Methods of this kind have been successfully applied \cite{GarrappaPopolizio2013,WeidemanTrefethen2007} to the ML function restricted to some very special cases ($0<\alpha<1$, $\beta=1$ or real $z$). 

The extension to the more general case is however not trivial and demands not only a different and more in-depth theoretical analysis but also a thorough different strategy. Since the possible presence of a large number of singularities of the LT, it can indeed be impossible to find a contour encompassing all the singularities and behaving in a satisfactory way for computational purposes. Our approach is therefore to consider separate regions in which the LT is analytic and look, in each region, for the contour and discretization parameters allowing to achieve a given accuracy. The \emph{optimal parabolic contour} (OPC) algorithm hence selects the region in which the numerical inversion of the LT is actually performed by choosing the one in which both the computational effort and the errors are minimized.

The paper is organized as follows. Section \ref{S:MLLaplace} introduces the LT of the ML function, describes its analyticity properties and discusses the numerical inversion. In Section \ref{S:Parabolic} the OPC algorithm is presented and a detailed error analysis is derived in order to provide information for the selection of the optimal contour and of the suitable quadrature parameters. Section \ref{S:Numerical} is hence devoted to illustrate numerical experiments and some concluding remarks are discussed in Section \ref{S:Conclusion}.

\section{Evaluation of the ML function by LT inversion}\label{S:MLLaplace}

During the last decades, an increasing amount of attention has been devoted to methods for computing special functions by inverting the LT; there are some key factors accounting for this interest:
\begin{enumerate}
	\item for several functions (including the ML) the LT has an analytical formulation which is much more simple than the function itself;
	\item algorithms for the numerical inversion of the LT are usually quite simple to implement and run in a fast way;
	\item it is possible to derive accurate error estimations and perform the computation virtually within any prescribed accuracy.
\end{enumerate}

Although from a theoretical point of view the inversion of the LT is an ill--posed problem, satisfactory numerical results are expected for the ML function since it is possible to evaluate its LT in the whole complex plane and with high accuracy.

%Although from a theoretical point of view the inversion of the LT is an ill--posed problem in the sense of Hadamard's definition of well--posedness, numerical methods have shown to succeed in providing satisfactory results, especially when the LT can be easily evaluated, with high accuracy, in the whole complex plane as in the case of the ML functions.

An explicit representation of the LT of (\ref{eq:ClassicalML}) and (\ref{eq:PrabhakarFunction}) is, however, not available. We must therefore introduce the following generalization of the ML function (\ref{eq:ClassicalML}) 
\begin{equation}\label{eq:GeneralizedML}
	e_{\alpha,\beta}(t;\lambda) = t^{\beta-1} E_{\alpha,\beta}(t^{\alpha} \lambda) , \quad
	t \in \Rset_{+}, \ \lambda \in \Cset ,
\end{equation}
in order to express the corresponding LT as \cite{KilbasSrivastavaTrujillo2006,Mainardi2010,Podlubny1999} 
\[
	{\cal E}_{\alpha,\beta}(s;\lambda) = \frac{s^{\alpha-\beta}}{s^{\alpha}-\lambda}
	, \quad \Re(s) > 0 \textrm{ and } |\lambda s^{-\alpha}| < 1 ,
\]
(for easy of presentation we just focus on the two parameter function (\ref{eq:ClassicalML}); the extension to three parameter case (\ref{eq:PrabhakarFunction}) will be discussed in the Subsection \ref{SS:Prabhakar}).

%The generalized function $e_{\alpha,\beta}(t;\lambda)$ is introduced for a quite technical reason since an explicit representation of the LT is known for (\ref{eq:GeneralizedML}) but not for (\ref{eq:ClassicalML}). It is however immediate to recast from (\ref{eq:GeneralizedML}) to (\ref{eq:ClassicalML}) by simply using $t=1$ and $\lambda=z$. 

By means of the formula for the inversion of the LT it is possible to formulate the following integral representation of  $e_{\alpha,\beta}(t;\lambda)$ 
\begin{equation} \label{eq:MLInverseLaplaceBromwich}
	e_{\alpha,\beta}(t;\lambda) = 
	\frac{1}{2\pi i} \int_{\sigma-i\infty}^{\sigma+i\infty} e^{st} {\cal E}_{\alpha,\beta}(s;\lambda) ds,
\end{equation}
where $(\sigma-i\infty , \sigma+i\infty$) is the Bromwich line, with $\sigma\in\Rset$ chosen to ensure that all the singularities of ${\cal E}_{\alpha,\beta}(s;\lambda)$ lie to the left of the line $\Re(s) = \sigma$. Since the presence of non integer powers, ${\cal E}_{\alpha,\beta}(s;\lambda)$ is a multi-valued function and a branch-cut extending from $0$ to $-\infty$ along the real axis is introduced to make the integrand single-valued. 

\begin{remark}\label{rmk:lambdanot0}
For convenience we assume $\lambda\not=0$; it is readily verified that $e_{\alpha,\beta}(t;0) = t^{\beta-1}/\Gamma(\beta)$. Moreover, also $t=0$ is of no interest since $e_{\alpha,\beta}(0;\lambda)=0$ for $\beta>1$, $e_{\alpha,1}(0;\lambda)=1/\Gamma(\beta)$ and $e_{\alpha,\beta}(t;\lambda)\to+\infty$ as $t\to0_{+}$ for $\beta < 1$. 
\end{remark}

%\begin{remark}\label{rmk:lambdanot0}
%For convenience, throughout the paper we assume $\lambda\not=0$; it is readily verified that $e_{\alpha,\beta}(t;0) = t^{\beta-1}/\Gamma(\beta)$. Moreover, the case $t=0$ is of no interest since $e_{\alpha,\beta}(0;\lambda)=0$ for $\beta>1$, $e_{\alpha,1}(0;\lambda)=1/\Gamma(\beta)$ and $e_{\alpha,\beta}(t;\lambda)\to+\infty$ as $t\to0_{+}$ for $0<\beta < 1$. 
%\end{remark}

As first suggested by Talbot \cite{Talbot1979}, to exploit (\ref{eq:MLInverseLaplaceBromwich}) for numerical computation it is necessary to deform the Bromwich line into an equivalent contour ${\cal C}$ that begins and ends in the left half of the complex plane in order to rapidly dampen the exponential factor $e^{st}$ and avoid high oscillations which are source of numerical instability (for the equivalence of the contours it is meant that they encompass all the singularities of ${\cal E}_{\alpha,\beta}(s;\lambda)$ to the left). Once a suitable contour is selected, a quadrature rule can be applied.

The above two steps are intimately related. As deeply studied in \cite{TrefethenWeideman2014,WeidemanTrefethen2007}, the choice of the contour affects in a significant way the convergence properties of the quadrature rule which depend on the analyticity of the integrand in a region surrounding the path of integration. A satisfactory selection of the deformed contour is therefore not possible without a subtle analysis of the regions in which ${\cal E}_{\alpha,\beta}(s;\lambda)$ is analytic.

After denoting $\theta = \Arg(\lambda)$, $-\pi < \theta \le \pi$, the poles of ${\cal E}_{\alpha,\beta}(s;\lambda)$, i.e. the solutions of the equation $s^{\alpha} - \lambda = 0$, are 
\begin{equation}\label{eq:EquationSingularity}
	\bar{s}^{\star}_j = \lambda^{1/\alpha} = | \lambda |^{1/\alpha} e^{i\frac{\theta + 2j\pi}{\alpha}} 
	, \quad j \in \Zset .
\end{equation}

The relevant poles are those in the main Riemann sheet, for which it is $- \pi < (\theta + 2j\pi)/\alpha \le \pi$ or, equivalently, such that $j$ belongs to
\begin{equation}\label{eq:BoundsBrunchCut}
	\bar{J}(\alpha,\theta) = 
	\left\{j\in \Zset \, \Bigr| \, -\frac{\alpha}{2} - \frac{\theta}{2\pi} < j \le \frac{\alpha}{2} - \frac{\theta}{2\pi} \right\} ;
\end{equation}
their number depends on $\alpha$ and $\theta$, ranging from zero when $0<\alpha<1$ and $|\theta|>\alpha\pi$ to a possible very large number otherwise. 

The origin is a pole only when $\beta > \alpha$; however, it must be always included among the singularities because a branch--point singularity occurs at the origin. 

From a formal point of view we denote with $S^{\star}=\bigl\{s_{0}^{\star},s_{1}^{\star},\dots,s_{J}^{\star}\bigr\}$ the set of all singularities (the poles and the branch--point) of ${\cal E}_{\alpha,\beta}(s;\lambda)$, where $s_{0}^{\star}=0$ and $s_{j}^{\star}=\bar{s}_{j-1-\min \bar{J}(\alpha,\theta)}^{\star}$, $j=1,\dots,J$ with $J=|\bar{J}(\alpha,\theta)|$.

In the presence of a large number of singularities, or when some of them have large imaginary part, it can result nearly impossible to find suitable contours allowing a fast decay of the exponential factor and, at the same time, encompassing all the singularities. For this reason it can be useful, thanks to the Cauchy's residue theorem, to remove some of the poles by residue subtraction
\begin{equation} \label{eq:GeneralizedMLInverseLaplace}
	e_{\alpha,\beta}(t;\lambda) = 
		\sum_{s^{\star} \in S^{\star}_{\cal C}} \mathop{Res} \bigl( e^{st} {\cal E}_{\alpha,\beta}(s;\lambda) , s^{\star} \bigr) +
		\frac{1}{2\pi i} \int_{{\cal C}} e^{st} {\cal E}_{\alpha,\beta}(s;\lambda) ds,
\end{equation}
where $S^{\star}_{\cal C} \subseteq S^{\star}$ is the set of all singularities of ${\cal E}_{\alpha,\beta}$ laying on the rightmost part of the complex plane delimited by ${\cal C}$ and $\mathop{Res} \bigl( f , s^{\star} \bigr)$ denotes the residue of the function $f$ at $s^{\star}$ (observe that, due to the selected branch--cut, ${\cal C}$ cannot traverse the negative real semi axis and must encompass at least $s^{\star}_0=0$ to its left).

It is a favorable achievement that the residues in (\ref{eq:GeneralizedMLInverseLaplace}) can be explicitly represented in terms of elementary functions as
\[
	\mathop{Res} \bigl( e^{st} {\cal E}_{\alpha,\beta}(s;\lambda) , s^{\star} \bigr)
	= \frac{1}{\alpha} \bigl(s^{\star} \bigr)^{1-\beta} e^{s^{\star} t} .
\]

Assumed the contour ${\cal C}$ be represented by means of a complex--valued function $z(u)$, $-\infty < u < \infty$, 
then (\ref{eq:GeneralizedMLInverseLaplace}) can be rewritten as
\begin{equation}\label{eq:GeneralizedMLInverseLaplace2}
	e_{\alpha,\beta}(t;\lambda) = 
		 \frac{1}{\alpha} \sum_{s^{\star} \in S^{\star}_{\cal C}} \bigl(s^{\star} \bigr)^{1-\beta} e^{s^{\star} t} +
		\frac{1}{2\pi i} \int_{-\infty}^{\infty} g(u) du,
\end{equation}
where
\begin{equation}\label{eq:IntegrandFunction}
	g(u) = e^{z(u)t} {\cal E}_{\alpha,\beta}(z(u);\lambda) z'(u)
	= \frac{e^{z(u)t}  (z(u))^{\alpha-\beta}  z'(u)}{(z(u))^{\alpha} - \lambda } .
\end{equation}

Numerical quadratures for integrals on unbounded intervals 
\[
	I = \int_{-\infty}^{\infty} g(u) du
\]
are presented in several papers and reference books (e.g., see \cite{DavisRabinowitz1984}). An extensive analysis of the trapezoidal rule has been recently provided in the remarkable paper by Trefethen and Weideman \cite{TrefethenWeideman2014} which not only focuses on the fast convergence of the trapezoidal rule but also discusses its main practical applications. Despite its simplicity, the trapezoidal rule appears indeed as a powerful tool to perform fast and highly accurate integration in a variety of applications.

On a given equispaced grid $kh$, $k\in\Zset$, with step-size $h>0$, the infinite and finite trapezoidal approximations of $I$ are
\[
	I_{h} = h  \sum_{k=-\infty}^{\infty} g(kh) , \quad	
	I_{h,N} = h  \sum_{k=-N}^{N} g(kh)
\]
and the corresponding error $I - I_{h,N}$ results from the sum of the discretization error $DE = \bigl|I-I_{h}\bigr|$ and the truncation error $TE = \bigl|I_{h}-I_{h,N}\bigr|$.

Under the assumption that $g(u)$ decays rapidly as $u \to \pm \infty$, an estimation of $TE$ is given by the last term retained in the summation, i.e. $TE = {\cal O} \bigl(\left|g(hN)\right|\bigr)$, $N\to \infty$.

As discussed in \cite{TrefethenWeideman2014,WeidemanTrefethen2007}, the estimation of $DE$ is performed on the basis of the analyticity properties of $g(u)$. For reasons which will be clear later, we need to slightly modify the statement of the original result, with no substantial changes in the proof which remains the same as outlined in \cite{TrefethenWeideman2014}.

\begin{theorem}\label{thm:ErrorIntegrationContour}
Let $g(w)$ be analytic in a strip $-d^{\star}<\Im(w)<c^{\star}$, for some $c^{\star}>0$, $d^{\star}>0$, with $g(w)\to0$ uniformly as $|w|\to\infty$ in that strip. For any $0<c<c^{\star}$ and $0<d<d^{\star}$ it is
\[
	DE = \bigl| I - I_{h} \bigr| \le DE_{+}(c) + DE_{-}(d) ,
\]
where 
\[
	DE_{+}(c) = \frac{M_{+}(c)}{e^{2\pi c/h} - 1}
	, \quad
	DE_{-}(d) = \frac{M_{-}(d)}{e^{2\pi d/h} - 1} 
\]
and
\[
 M_{+}(c) = \max_{0\le r \le c} \int_{-\infty}^{\infty} \bigl| g(u+ir) \bigr| du  
 , \quad
 M_{-}(d) = \max_{0\le r \le d} \int_{-\infty}^{\infty} \bigl| g(u-ir) \bigr| du .
\]
\end{theorem}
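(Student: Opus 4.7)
The plan is to adapt the classical Poisson-summation proof recalled in Trefethen--Weideman to the asymmetric strip $-d^{\star} < \Im(w) < c^{\star}$. The starting point is the Poisson summation identity
\[
I - I_h = -\sum_{k \neq 0} \hat{g}(k/h), \qquad \hat{g}(\xi) = \int_{-\infty}^{\infty} g(u) e^{-2\pi i u \xi}\,du,
\]
which is justified by the analyticity of $g$ in a neighbourhood of the real axis and its uniform decay at infinity. This reduces matters to estimating $|\hat{g}(k/h)|$ for each nonzero integer $k$.

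The decisive step is a contour shift performed inside the strip of analyticity. For $k<0$, I would translate the integration contour defining $\hat g(k/h)$ upward to the line $\Im(w) = r$ for any $0 < r < c^{\star}$; the vertical side contributions vanish in the limit thanks to the uniform decay of $g$, and no residues are collected because $g$ is analytic throughout the strip. Writing $w = u + ir$ immediately yields
\[
|\hat g(k/h)| \le e^{-2\pi r |k|/h} \int_{-\infty}^{\infty} |g(u+ir)|\,du .
\]
The symmetric argument with a downward shift to $\Im(w) = -r$, $0 < r < d^{\star}$, handles $k>0$.

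Choosing $r = c$ in the first estimate and $r = d$ in the second, and summing the resulting geometric series in $|k|$, gives
\[
\sum_{k < 0} |\hat g(k/h)| \le \frac{M_+(c)}{e^{2\pi c/h} - 1}, \qquad \sum_{k > 0} |\hat g(k/h)| \le \frac{M_-(d)}{e^{2\pi d/h} - 1},
\]
where the integrals at heights $+c$ and $-d$ are dominated by the maxima $M_+(c)$ and $M_-(d)$ directly from their definition. Combining both contributions yields the claimed bound $DE \le DE_+(c) + DE_-(d)$.

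The only mildly technical step is the rigorous justification of the contour shift under a merely qualitative decay assumption on $g$; this is carried out by a routine rectangle-and-limit argument in which the two vertical sides at $|\Re(w)| = R$ are shown to contribute $o(1)$ as $R \to \infty$, so it is here that the \emph{uniform} (rather than pointwise) decay of $g$ in the strip is essential. The use of maxima in the definition of $M_\pm$ instead of the specific values at heights $c$ and $d$ costs nothing in the bound but has the convenient effect of making $M_\pm$ monotone in their arguments, which will matter later when optimizing the contour. The main conceptual departure from the symmetric-strip version is simply that the upward and downward shifts are performed independently, on different scales $c$ and $d$.
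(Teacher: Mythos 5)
Your argument is correct and yields exactly the stated asymmetric bound. Note, however, that the paper does not prove this theorem at all: it states that the proof is unchanged from Trefethen--Weideman, and the proof given there represents the error directly as a contour integral of $g(w)$ against the kernel $\bigl(e^{\mp 2\pi i w/h}-1\bigr)^{-1}$ along the two horizontal lines $\Im(w)=c$ and $\Im(w)=-d$, after which the bound $|e^{\mp 2\pi i w/h}-1|\ge e^{2\pi c/h}-1$ (resp.\ $e^{2\pi d/h}-1$) gives the result in one stroke. Your Poisson-summation route is the term-by-term unpacking of the same mechanism: the geometric series $\sum_{k\ge 1}e^{-2\pi ck/h}=(e^{2\pi c/h}-1)^{-1}$ that you sum at the end is precisely the expansion of that kernel, and the contour shift you apply to each $\hat g(k/h)$ plays the role of the single shifted contour in the classical proof. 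The two approaches are therefore equivalent in content; yours is arguably more elementary (no residue bookkeeping for the kernel's poles at the grid points), while the contour-integral version avoids having to justify the Poisson summation identity itself. On that last point, one small caveat: uniform decay $g(w)\to 0$ alone does not guarantee the absolute convergence of $I_h$ or the validity of the aliasing identity; you also need the integrability of $g$ along horizontal lines, i.e.\ the finiteness of $M_{+}(c)$ and $M_{-}(d)$, which is implicit in the statement (the bound is vacuous otherwise) and is an explicit hypothesis in Trefethen--Weideman. It would be worth stating that you use it. Everything else --- the direction of the shifts (upward for $k<0$, downward for $k>0$), the vanishing of the vertical sides, and the domination of the line integrals at heights $c$ and $-d$ by the maxima $M_{\pm}$ --- is sound.
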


In most cases (for instance with the exponential function \cite{WeidemanTrefethen2007}),  the contribution of $M_{+}(c)$ and $M_{-}(d)$ is negligible and the estimations $DE_{+}(c) \approx e^{-2\pi c/h}$ and $DE_{+}(d) \approx e^{-2\pi d/h}$ are sufficiently accurate for a satisfactory error analysis. When applied to the ML function it is possible, depending on the parameters $\alpha$ and $\beta$, that $M_{+}(c)\to+\infty$ when $c \to c^{\star}$ and $M_{-}(d)\to+\infty$ when $d \to d^{\star}$. The consequence of unbounded limits for $M_{+}(c)$ and $M_{-}(d)$ is that their contribution can be nonnegligible. This is especially true within very narrow strips of analyticity (as when there are several singularities), for which $c$ or $d$ are necessarily close to their upper bounds $c^{\star}$ and $d^{\star}$.

Providing a reliable estimation for $M_{+}(c)$ and $M_{-}(d)$ (and for the rate by which they tend to $+\infty$) and including them in the error analysis is therefore of utmost importance in order to select optimal parameters and fulfill an assigned accuracy.

%Thus, to achieve a proper evaluation of the error we must provide a reliable estimation for $M_{+}(c)$ and $M_{-}(d)$ (and, in particular, of the rate by which they tend to infinity as $c \to c^{\star}$ and  $d \to d^{\star}$) and include them in the analysis leading to the selection of the parameters necessary to perform the numerical inversion of the ML function.

%For this reason, with the aim of achieving a suitable selection of the parameters based on an accurate estimation of the error, one of the main task of the forthcoming analysis will be the study of reliable estimations for $M_{+}(c)$ and $M_{-}(d)$ and, in particular, of the rate with which they tend to infinity.

\section{Parabolic contours and the OPC algorithm}\label{S:Parabolic}

Removing some of the singularities by the residue subtraction in (\ref{eq:GeneralizedMLInverseLaplace}) offers a considerable freedom in the choice of the integration path. The task of selecting a suitable contour in a specific region of the complex plane is greatly simplified by first fixing the geometric shape and hence adopting a parametrized description of the contour with very few (usually just one) parameters; the problem is thus reduced to the evaluation of the optimal parameters.

%Several families of contours have been proposed in the past. After the original contribution of Talbot \cite{Talbot1979} relying on contours of cotangent shape (see also  \cite{DingfelderWeideman2014,MurliRizzardi1990,Weideman2006}), a special attention has been paid to parabolic \cite{Butcher1957,GavrilyukMakarov2005,Weideman2010,WeidemanTrefethen2007} and hyperbolic contours \cite{LopezPalenciaSchadle2006,GavrilyukMakarov2005,SheenSloanThomee2003,WeidemanTrefethen2007}. 

Several families of contours have been proposed so far. After the original work of Talbot \cite{Talbot1979} on contours of cotangent shape (see also  \cite{DingfelderWeideman2014,MurliRizzardi1990,Weideman2006}), a special attention has been paid to parabolic \cite{Butcher1957,GavrilyukMakarov2005,Weideman2010,WeidemanTrefethen2007} and hyperbolic contours \cite{LopezPalenciaSchadle2006,GavrilyukMakarov2005,SheenSloanThomee2003,WeidemanTrefethen2007}. 

The convergence rates of the $N$--points trapezoidal rule on cotangent, hyperbolic and parabolic contours have been studied in \cite{TrefethenWeidemanSchmelzer2006}; the respective rates of ${\cal O}\bigl(3.89^{-N}\bigr)$, ${\cal O}\bigl(3.20^{-N}\bigr)$ and ${\cal O}\bigl(2.85^{-N}\bigr)$ indicate a fast convergence with all these contours.  

%Although the convergence with cotangent and hyperbolic contours is slightly faster than with parabolas, it is also noticeable that the simpler representation of parabolic contours makes them much more easy to handle; therefore, parabolas appear to be preferable especially when the presence of a certain number of singularities demands the fulfillment of tightened constraints. 

Although the convergence with cotangent and hyperbolic contours is slightly faster, the simpler representation of parabolic contours makes them much more easy to handle; therefore, parabolas appear to be preferable especially when the presence of a certain number of singularities demands the fulfillment of tightened constraints. 

As in \cite{TrefethenWeideman2014,WeidemanTrefethen2007}, for a real parameter $\mu>0$ we consider the parabolic contour
\begin{equation}\label{eq:ParabolicContour}
        {\cal C} \, : \, z(u) = \mu \bigl( i u + 1 \bigr)^2
        , \quad
        - \infty < u < \infty .
\end{equation}

To select the singularities that must be removed in (\ref{eq:GeneralizedMLInverseLaplace}), we partition the complex plane in neighboring regions having the singularities of ${\cal E}_{\alpha,\beta}$ on their respective boundaries; in each region the parabolic contour and the discretization parameters are determined, according to a suitably modified version of the procedure described in \cite{GarrappaPopolizio2013,WeidemanTrefethen2007}, with the aim of fulfilling a prescribed accuracy $\varepsilon>0$. Among the possible contours (one for each region), the OPC algorithm makes an optimal selection with respect to the computational effort: the region and the contour involving the smaller number $N$ of quadrature nodes is selected. Nevertheless, some issues related to reduce the propagation of round-off errors are also addressed.

As already observed in \cite{GarrappaPopolizio2013}, the computation necessary to select the contour is much less than the computation involved by the inversion of the LT. Thus the overall process of establishing the  contour in an optimal way adds only a negligible amount of computation, with the obvious advantage of performing the actual, and more expensive, inversion with the smallest possible number of floating point operations. 

The starting step in the OPC algorithm is to sort the singularities of ${\cal E}_{\alpha,\beta}$ in order to identify a sequence of regions delimited by two consecutive singularities. To this purpose we introduce the function $\varphi : \Cset \to \Rset_{+}$ defined according to
\[
	\varphi(s) = \frac{\Re (s) + |s|}{2} .
\]

The function $\varphi$ allows to split the complex plane in regions bounded by parabolas of type (\ref{eq:ParabolicContour}) as stated in the following Proposition.

\begin{proposition}\label{prop:PositionPointParabola}
Let $z(u) = \mu (iu+1)^2$, with $\mu > 0$. A point $s\in \Cset$ lies on the parabola $z(u)$ whenever $\varphi(s) = \mu$. Moreover, whenever $\varphi(s) < \mu$ the point $s$ lies at the left of the parabola $z(u)$ and whenever $\varphi(s) > \mu$ the point $s$ lies at the right of the parabola $z(u)$.
\end{proposition}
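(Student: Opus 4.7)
The plan is direct algebra. First I would write the parabola in Cartesian form: expanding $z(u) = \mu(iu+1)^2 = \mu(1-u^2) + 2i\mu u$, so a point $s = x + iy$ lies on $\cal C$ iff there exists $u \in \Rset$ with $x = \mu(1-u^2)$ and $y = 2\mu u$. Eliminating $u = y/(2\mu)$ gives the equivalent Cartesian equation
\begin{equation*}
   y^{2} = 4\mu^{2} - 4\mu x, \qquad x \le \mu .
\end{equation*}
So "on the parabola" and "to the left/right" translate into the elementary conditions $y^{2} = 4\mu^{2} - 4\mu x$, $y^{2} > 4\mu^{2} - 4\mu x$ (left of the vertex-opening-left parabola), and $y^{2} < 4\mu^{2} - 4\mu x$, respectively.

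Next I would rewrite the condition $\varphi(s) = \mu$ as $|s| = 2\mu - x$, and similarly $\varphi(s) < \mu \Leftrightarrow |s| < 2\mu - x$ and $\varphi(s) > \mu \Leftrightarrow |s| > 2\mu - x$. When $2\mu - x \ge 0$ one may square both sides without losing equivalence: since $|s|^{2} = x^{2} + y^{2}$, the equality case becomes $x^{2} + y^{2} = 4\mu^{2} - 4\mu x + x^{2}$, i.e.\ exactly $y^{2} = 4\mu^{2} - 4\mu x$; the strict inequalities become $y^{2} \lessgtr 4\mu^{2} - 4\mu x$, matching the three geometric alternatives above.

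The only delicate point, and what I expect to be the main (though still minor) obstacle, is handling the sign condition $2\mu - x \ge 0$ needed before squaring. If $x > 2\mu$ then the right-hand side is negative, so $|s| > 2\mu - x$ holds trivially, giving $\varphi(s) > \mu$; simultaneously $x > 2\mu > \mu - y^{2}/(4\mu)$, so $s$ is to the right of the parabola, and the claim is consistent. For $x \le 2\mu$ the squaring step is reversible and the equivalence is immediate. Together with the observation that every point on the parabola has $x \le \mu < 2\mu$, this covers all three cases $\varphi(s) = \mu$, $\varphi(s) < \mu$, $\varphi(s) > \mu$ and completes the proof.
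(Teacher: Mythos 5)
Your approach is sound and is essentially the paper's own: both proofs expand $z(u)=\mu(1-u^2)+2i\mu u$ and reduce everything to elementary algebra. The paper organizes it slightly differently for each half: for the "on the parabola" part it solves the system $\mu(1-u^2)=\Re(s)$, $2\mu u=\Im(s)$ for $\mu$ and observes that the unique positive root of the resulting quadratic is exactly $\varphi(s)=(\Re(s)+|s|)/2$, while for the left/right part it simply invokes the fact that the parabolas form a nested family sweeping from left to right as $\mu$ increases. Your version instead fixes $\mu$, writes the Cartesian equation $y^2=4\mu^2-4\mu x$, and squares the inequality $|s|\lessgtr 2\mu-x$, with the sign check on $2\mu-x$ handled correctly; this buys a fully explicit equivalence at the cost of the small case analysis, whereas the paper's monotone-family argument is shorter but less self-contained.

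One slip needs fixing: in your first paragraph you label $y^{2} > 4\mu^{2} - 4\mu x$ as "left of the parabola" and $y^{2} < 4\mu^{2} - 4\mu x$ as "right," but this is backwards. The left (interior) side of the leftward-opening parabola $y^2=4\mu(\mu-x)$ is where $x<\mu-y^2/(4\mu)$, i.e.\ $y^{2} < 4\mu^{2} - 4\mu x$ (test the point $s=0$, which has $\varphi(0)=0<\mu$ and clearly lies to the left). Your own final paragraph uses the correct orientation ($x>\mu-y^2/(4\mu)$ meaning "right"), so this is a labeling typo rather than a structural error; with the parenthetical corrected, the chain $\varphi(s)<\mu \Leftrightarrow y^2<4\mu^2-4\mu x \Leftrightarrow s$ lies to the left of the parabola is exactly the claim, and the proof is complete.
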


\begin{proof}
After expanding $z(u) = \mu (1-u^2) + 2 i u \mu$, it is immediate to see that a point $s \in \Cset$ lies on the parabola described by $z(u)$ whenever
\[
	\left\{\begin{array}{l}
		\mu \left( 1 - u^2 \right) = \Re (s) \\
		2 \mu u = \Im (s) \\
	\end{array}; \right.
\]
hence, the first part of the proof follows by considering the unique positive solution $\mu$ of this system. The remaining statements easily follow by observing that $z(u)$ defines, as $\mu$ varies, a bundle of parabolas moving from left towards right as $\mu$ increases.\qquad
\end{proof}

Hereafter, we assume that the singularities $s_{j}^{\star} \in S^{\star}$ are arranged in ascending order with respect to the function $\varphi$, i.e.
\[
	0 = \varphi(s^{\star}_{0}) < \varphi(s^{\star}_{1}) < \dots < \varphi(s^{\star}_{J}) .
\]

We can therefore consider the $J+1$ parabolas defined by (\ref{eq:ParabolicContour}), each with $\mu=\varphi(s^{\star}_{j})$, and determining $J+1$ disjointed regions $R_j$ in the complex plane with the singularities $s^{\star}_{j}$ and $s^{\star}_{j+1}$ on the left and on the right boundary (except for the last region $R_{J}$ which is unbounded to the right). In Figure \ref{fig:Fig_PolesParabolas} we show, for instance, the complex plane partitioned into 6 regions of this kind (note that the first parabola, the one with  $\mu=\varphi(s^{\star}_{0})=0$, collapses onto the negative real axis).

\begin{figure}[ht]
	\centering
	\includegraphics[width=0.6\textwidth]{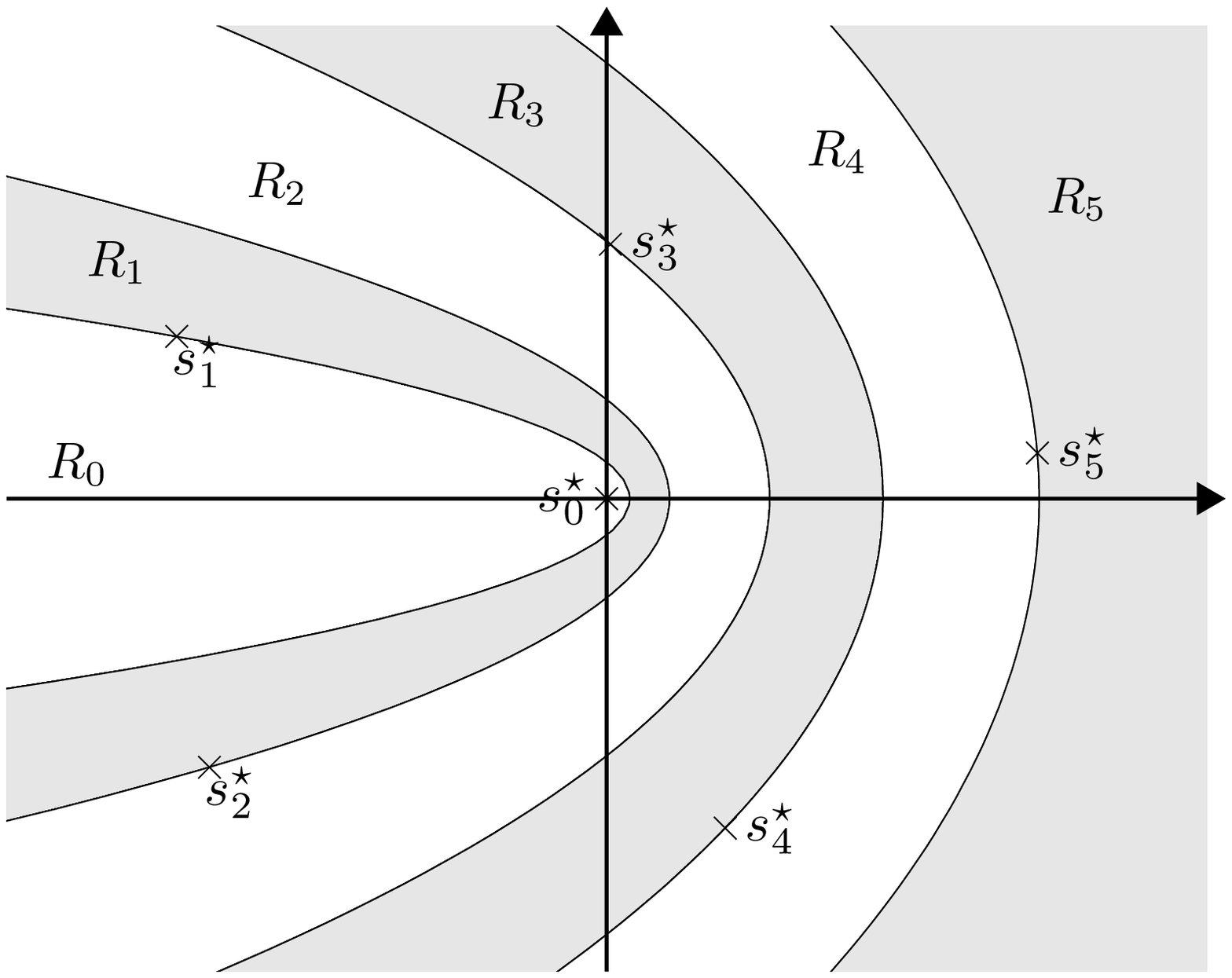} 
	\caption{Partitioning of $\Cset$ into some regions $R_{j}$ by means of parabolas (\ref{eq:ParabolicContour}) through $s^{\star}_j$.}
	\label{fig:Fig_PolesParabolas}
\end{figure}

A parabolic contour defined according to (\ref{eq:ParabolicContour}) is in a region $R_{j}$, $j \in \{0,1,\dots,J\}$, whenever $\mu$ satisfies 
\begin{equation}\label{eq:BoundsMuj}
	\varphi(s^{\star}_{j}) < \mu < \varphi(s^{\star}_{j+1}) .
\end{equation}
(for convenience, a fictitious singularity $s^{\star}_{J+1}$ with $\varphi(s^{\star}_{J+1})=+\infty$ is introduced). At the same time, if $z_j(u) = \mu_{j} (iu+1)^2$ is a parabolic contour with $\mu_{j}$ satisfying (\ref{eq:BoundsMuj}), then $R_{j}$ is the region of analyticity of Theorem \ref{thm:ErrorIntegrationContour} for $g_j(u) = e^{z_j(u)t} {\cal E}_{\alpha,\beta}(z_j(u);\lambda)z_j'(u)$ and can be expressed as 
\begin{equation}\label{eq:RegionConformalMap}
	R_{j} = \left\{ z \in \Cset \, | \, z = \mu_{j}(iw+1)^2 , \, w \in \Cset \, \textrm{ and } \,
	-d^{\star}_j<\Im(w)<c^{\star}_{j} \right\} ,
\end{equation}	
with
\begin{equation}\label{eq:BoundsCstarDstar}
	c^{\star}_{j} = \left\{ \begin{array}{ll}
		1 & j=0 \\
		\displaystyle
		1 - \sqrt{\frac{\varphi(s^{\star}_{j})}{\mu_{j}}} \quad & j \ge 1 \\
		\end{array} \right.
	\quad \quad
	d^{\star}_{j} = \left\{ \begin{array}{ll}
		\displaystyle
		\sqrt{\frac{\varphi(s^{\star}_{j+1})}{\mu_{j}}}-1 \quad & j \le J \\	
		+\infty & j = J  .\\
	\end{array} \right.
	\quad	
\end{equation}

Basically, by means of (\ref{eq:BoundsCstarDstar}) it is possible to represent each region of analyticity $R_{j}$ as the strip $-d^{\star}_j<\Im(w)<c^{\star}_{j}$ in the $w$-plane.

The estimation of the discretization error of Theorem \ref{thm:ErrorIntegrationContour} involves finding upper bounds for $M_{+}(c_{j})$ and $M_{-}(d_{j})$, with $c_{j} < c^{\star}_{j}$ and $d_{j} < d^{\star}_{j}$, in each region $R_{j}$. Matching the corresponding $DE_{+}(c_{j})$ and $DE_{-}(d_{j})$ with $TE = {\cal O} \bigl(\left|e^{z_{j}(hN)t}\right|\bigr)= {\cal O} \bigl(\left|e^{\mu_{j}(1-(h_{j}N_{j})^2t}\right|\bigr)$, according to a procedure similar to that devised in \cite{WeidemanTrefethen2007}, allows to obtain the optimal parameters (contour geometry $\mu_{j}$, step--size $h_{j}$ and number $N_{j}$ of quadrature nodes) in order to achieve a prescribed tolerance $\varepsilon>0$. 

The region involving the minimum computational effort (i.e., the one with the minimum number of quadrature nodes) is hence selected to perform the numerical inversion of the LT; the residues corresponding to the singularities left out by the selected contour are accordingly added in the final result as stated by (\ref{eq:GeneralizedMLInverseLaplace}). 

The main steps of the OPC algorithm can be therefore listed as follows: 
\begin{enumerate}
	\item estimation of $M_{+}(c_{j})$ and $M_{-}(d_{j})$ in each region $R_{j}$;
	\item matching, in each region $R_{j}$, of $DE_{+}(c_{j})$, $DE_{-}(c_{j})$ and $TE$ with the prescribed accuracy $\varepsilon >0$ and evaluation of the parameters $\mu_{j}$, $h_{j}$ and $N_{j}$;
	\item selection of the region $R_{j}$ in which to perform the integration on the basis of the lowest computation and reduction of round--off errors.
\end{enumerate}

\subsection{Estimation of $M_{+}(c_{j})$ and $M_{-}(d_{j})$ in each region $R_j$}\label{SS:EstimationMcMd}

To provide an estimation of $M_{+}(c_{j})$ we distinguish the case in which the region $R_{j}$ is bounded to the left by the singularity at the origin (i.e., $j=0$) and the case in which the singularity on the left boundary of $R_{j}$ is one of the poles of ${\cal E}_{\alpha,\beta}$ except the origin (i.e., $j>0$). 

To discuss the first case we introduce the following preliminary result.

\begin{lemma}\label{lem:AsymptoticIntegral}
Let $A,\sigma > 0$ and $p \in \Rset$. Then as $A \to 0$
\[
	\int_{-\infty}^{\infty} e^{-\sigma u^2} (u^2 + A)^{p} \, du = 
	\left\{ \begin{array}{ll}
		{\cal O} \bigl(1 \bigr) & \textrm{ if } p > -\frac{1}{2} \\
		{\cal O} \bigl( \log \sigma A \bigr)  & \textrm{ if } p = -\frac{1}{2} \\
		{\cal O} \bigl( A^{p+\frac{1}{2}} \bigr) & \textrm{ if } p < - \frac{1}{2}  .\\
	\end{array} \right.
\]
\end{lemma}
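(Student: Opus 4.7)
The plan is to isolate the $A$-dependence via the rescaling $u = \sqrt{A}\,v$, which maps the singularity of $(u^2+A)^p$ at the origin onto the fixed profile $(v^2+1)^p$. By evenness I restrict to $u\ge 0$ and split the integration domain at a fixed threshold, say $u=1$. On the tail $[1,\infty)$ the factor $(u^2+A)^p$ is bounded uniformly in $A\in(0,1]$ (by $u^{2p}$ when $p\le 0$ and by $(1+u^2)^p$ when $p\ge 0$), so the Gaussian decay of $e^{-\sigma u^2}$ makes the tail contribution $\mathcal{O}(1)$ independently of $p$. The whole asymptotic behaviour must therefore come from the piece on $[0,1]$.

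After the rescaling, the near-origin piece becomes
\[
A^{p+1/2}\int_{0}^{1/\sqrt{A}} e^{-\sigma A v^2}\,(v^2+1)^p\,dv ,
\]
and I would treat the three regimes separately. If $p<-1/2$, then $(v^2+1)^p$ is integrable on $[0,\infty)$ and $e^{-\sigma A v^2}\le 1$, so the $v$-integral is uniformly bounded in $A$ and the whole expression is $\mathcal{O}(A^{p+1/2})$. If $p>-1/2$, I would instead argue in the original variable via dominated convergence with dominating function $e^{-\sigma u^2}\max(u^{2p},1)$, which is integrable on $[0,1]$ precisely because $2p>-1$; the integral then converges to the finite positive limit $\int_0^1 e^{-\sigma u^2}u^{2p}\,du$, giving $\mathcal{O}(1)$.

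The threshold case $p=-1/2$ is where the logarithm is born and is the only step carrying real content. With $A^{p+1/2}=1$ the task reduces to estimating $\int_0^{1/\sqrt{A}} e^{-\sigma A v^2}(v^2+1)^{-1/2}\,dv$; I split again at $v=1$, bound the bounded portion by $\mathcal{O}(1)$, and on $[1,1/\sqrt{A}]$ use $(v^2+1)^{-1/2}\le 1/v$ together with the substitution $w=\sqrt{\sigma A}\,v$, producing
\[
\int_{\sqrt{\sigma A}}^{\sqrt{\sigma}} \frac{e^{-w^2}}{w}\,dw = \mathcal{O}\bigl(\bigl|\log(\sigma A)\bigr|\bigr) \quad \textrm{as } A\to 0 ,
\]
which matches the bound stated in the lemma. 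The main obstacle is precisely this last computation: keeping track of how $\sigma$ enters the logarithm, and checking that the three regimes glue together consistently in the limit $p\to -1/2$ (where the $\mathcal{O}(A^{p+1/2})$ bound degenerates to $\mathcal{O}(1)$ but only after absorbing a logarithmic correction).
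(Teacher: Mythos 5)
Your proof is correct, but it follows a genuinely different route from the paper. The paper performs the same initial reduction (splitting at $u=0$ and substituting $s=u^2/A$ to extract the factor $A^{p+1/2}$), but then recognizes the resulting integral as $A^{p+1/2}\,\Gamma(\tfrac12)\,\Psi(\tfrac12,p+\tfrac32,\sigma A)$, the Tricomi confluent hypergeometric function of the second kind, and reads off the three regimes from the known small-argument expansions of $\Psi(a,b,z)$ (which branch according to whether $b=p+\tfrac32$ is a non-integer, a non-positive integer, or a positive integer --- the logarithm at $p=-\tfrac12$ arising from the $b=1$ case). You instead argue entirely by elementary means: a fixed-threshold splitting, the rescaling $u=\sqrt{A}\,v$ near the origin, dominated convergence for $p>-\tfrac12$, integrability of $(v^2+1)^p$ for $p<-\tfrac12$, and the substitution $w=\sqrt{\sigma A}\,v$ producing $\int_{\sqrt{\sigma A}}^{\sqrt{\sigma}} e^{-w^2}w^{-1}\,dw=\mathcal{O}(|\log(\sigma A)|)$ in the borderline case. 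Each step of yours checks out (the tail bound, the uniform domination on $[0,1]$ for $2p>-1$, and the logarithmic estimate are all sound, and since only $\mathcal{O}$ upper bounds are claimed no matching lower bounds are needed). What the paper's approach buys is a full asymptotic expansion to all orders with explicit coefficients, at the cost of invoking special-function machinery; your approach is self-contained and makes transparent exactly where the logarithm comes from, but delivers only the leading-order bounds --- which is all the lemma asserts.
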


\begin{proof}
By splitting the integral into the two subintervals $(-\infty,0]$ and $[0,\infty)$ and making the change of variable $s=u^2/A$, it is possible to preliminarily observe that
\[
	\int_{-\infty}^{\infty} e^{- \sigma u^{2}} \left(u^{2}+A\right)^{p} du = 
	A^{p+\frac{1}{2}}  \int_{0}^{\infty} s^{-\frac{1}{2}} e^{- \sigma A s}   \bigl( s + 1\bigl)^{p} ds	.
\]

The right-hand side of the above equation is the integral representation of the confluent hypergeometric function of the second kind \cite{Tricomi1954}, namely the $\Psi(a,b,z)$ function with parameters $a=1/2$, $b=p+3/2$ and $z=\sigma A$, and hence
\[
	\int_{-\infty}^{\infty} e^{- \sigma u^{2}} \left(u^{2} + A\right)^{p} du
	= A^{p+\frac{1}{2}} \Gamma({\textstyle\frac{1}{2}}) \Psi \bigl( {\textstyle\frac{1}{2}}, p+{\textstyle\frac{3}{2}} , \sigma A \bigr) .
\]

As $z\rightarrow 0$ the $\Psi$--function admits the following asymptotic expansions \cite{WolframHypergeometricU}
\[
	\Psi(a,b,z) = \left\{ \begin{array}{ll}
		\displaystyle
		\sum_{j=0}^{\infty} u_{j}^{(1)} z^{j} +
		 z^{1-b} \sum_{j=0}^{\infty} v_{j}^{(1)} z^{j} 
		& \textrm{ if } b \not\in \Zset \\		
		\displaystyle
		\sum_{j=0}^{-b} u_{j}^{(2)} z^{j} + z^{1-b} \sum_{j=0}^{\infty} v_{j}^{(2)} z^{j} + \log(z) z^{1-b} \sum_{j=0}^{\infty} w_{j}^{(2)} z^{j}
		& \textrm{ if } b \in \Zset^{-} \cup \{0\} \\
		\displaystyle
		\sum_{j=0}^{\infty} u_{j}^{(3)} z^{j} + \sum_{j=1}^{b-1} v_{j}^{(3)} z^{-j}+ \log(z) \sum_{j=0}^{\infty} w_{j}^{(3)} z^{j} 
		& \textrm{ if } b \in \Zset^{+} \\	
	\end{array} \right.
\]
with coefficients $\{u_{j}^{(\ell)}\}_{j}$, $\{v_{j}^{(\ell)}\}_{j}$ and $\{w_{j}^{(\ell)}\}_{j}$, $\ell=1,2,3$, independent of $z$. The proof now follows by considering the leading terms in each summation. \qquad
%with $\{u_{j}^{(\ell)}\}_{j}$, $\{v_{j}^{(\ell)}\}_{j}$ and $\{w_{j}^{(\ell)}\}_{j}$, $\ell=1,2,3$, some sequences of coefficients depending on $a$ and $b$ but not on $z$. Hence, the proof follows by considering just the leading terms in each summation. \qquad
\end{proof}

The first region $R_{0}$ is bounded to the left by the singularity $s^{\star}_{0}$ at the origin; since (\ref{eq:BoundsCstarDstar}), the corresponding upper bound for the strip of analyticity in the $w$-plane is $c^{\star}_{0}=1$. We can provide the following estimation for $M_{+}(c_{0})$ for $c_{0}<1$.

\begin{proposition}\label{prop:Mcpuls1}
Let $\mu_{0}$ be such that $0<\mu_{0}<\varphi(s^{\star}_{1})$. For any $c_{0}< 1$ there exists a constant $\bar{M}_{+}$ (independent of $c_{0}$)  such that 
\[
	M_{+}(c_{0}) < \bar{M}_{+} \cdot \tilde{M}(c_{0}) , 
\]
where as $c\to 1$ it is
\[
	\tilde{M}(c) = \left\{ \begin{array}{ll}
		{\cal O} \bigl(1 \bigr) & \textrm{ if } \beta < \alpha + 1  \\
		{\cal O} \bigl( \log \mu_{0} t (1-c)^2 \bigr) = {\cal O} \bigl( \log (1-c) \bigr) & \textrm{ if } \beta = \alpha + 1  \\
		{\cal O} \bigl( (1-c)^{2(\alpha-\beta+1)} \bigr) & \textrm{ if } \beta > \alpha + 1 . \\
	\end{array} \right.
\]
\end{proposition}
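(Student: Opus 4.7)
The plan is to bound $|g(u+ir)|$ pointwise using the explicit parametrisation and then integrate, reducing the resulting integral to the one treated in Lemma \ref{lem:AsymptoticIntegral}. Substituting $w=u+ir$ into $z(w)=\mu_{0}(iw+1)^{2}$ yields $z(u+ir)=\mu_{0}\bigl((1-r)+iu\bigr)^{2}$, from which one reads off
\begin{align*}
\bigl|e^{z(u+ir)t}\bigr| &= e^{\mu_{0}t(1-r)^{2}}\,e^{-\mu_{0}tu^{2}}, \\
\bigl|z(u+ir)^{\alpha-\beta}\bigr| &= \mu_{0}^{\alpha-\beta}\bigl((1-r)^{2}+u^{2}\bigr)^{\alpha-\beta}, \\
\bigl|z'(u+ir)\bigr| &= 2\mu_{0}\bigl((1-r)^{2}+u^{2}\bigr)^{1/2},
\end{align*}
and hence
\[
\bigl|g(u+ir)\bigr|=\frac{2\mu_{0}^{\alpha-\beta+1}\,e^{\mu_{0}t(1-r)^{2}}\,e^{-\mu_{0}tu^{2}}\bigl((1-r)^{2}+u^{2}\bigr)^{\alpha-\beta+1/2}}{\bigl|z(u+ir)^{\alpha}-\lambda\bigr|}.
\]

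The next step is a lower bound on the denominator that is uniform in $c_{0}$. Since $\mu_{0}<\varphi(s^{\star}_{1})$ and $r\le c_{0}<1=c^{\star}_{0}$, the shifted parabola $u\mapsto z(u+ir)$ lies inside the pole-free region $R_{0}$, so the denominator never vanishes. The independence of the eventual constant from $c_{0}$ follows from the observation that the only contour degeneration as $c_{0}\to 1^{-}$ is the pinching of the branch point at the origin, where $|z^{\alpha}-\lambda|\to|\lambda|>0$; combined with $|z^{\alpha}-\lambda|\to\infty$ as $|u|\to\infty$, a continuity/compactness argument produces a constant $\bar{c}_{\lambda}>0$, depending on $\lambda$, $\mu_{0}$, $\alpha$ but not on $c_{0}$, such that $\bigl|z(u+ir)^{\alpha}-\lambda\bigr|\ge\bar{c}_{\lambda}$ throughout the whole family of shifted contours.

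Absorbing $\bar{c}_{\lambda}^{-1}$, $\mu_{0}^{\alpha-\beta+1}$, and the bounded factor $e^{\mu_{0}t(1-r)^{2}}\le e^{\mu_{0}t}$ into a single constant $\bar{M}_{+}$ (independent of $c_{0}$), the remaining task is to apply Lemma \ref{lem:AsymptoticIntegral} to
\[
\int_{-\infty}^{\infty}e^{-\mu_{0}tu^{2}}\bigl(u^{2}+(1-r)^{2}\bigr)^{\alpha-\beta+1/2}\,du
\]
with $\sigma=\mu_{0}t$, $A=(1-r)^{2}$, $p=\alpha-\beta+1/2$. The three subcases $p>-1/2$, $p=-1/2$, $p<-1/2$ translate exactly into $\beta<\alpha+1$, $\beta=\alpha+1$, $\beta>\alpha+1$, and as $r\to 1^{-}$ (so $A\to 0$) they yield the behaviour ${\cal O}(1)$, ${\cal O}(\log\mu_{0}t(1-r)^{2})$ and ${\cal O}\bigl((1-r)^{2(\alpha-\beta+1)}\bigr)$ respectively. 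Taking the maximum over $r\in[0,c_{0}]$ then reproduces $\tilde{M}(c_{0})$: in the divergent cases the supremum is attained at $r=c_{0}$, while in the bounded case the conclusion is immediate.

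The principal technical obstacle is the $c_{0}$-independent lower bound on $|z(u+ir)^{\alpha}-\lambda|$ discussed in the second paragraph; once this is secured, the proof reduces to the pointwise envelope estimate above and a direct invocation of Lemma \ref{lem:AsymptoticIntegral}.
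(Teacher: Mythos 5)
Your proof is correct and follows essentially the same route as the paper: parametrise the shifted contour, bound $|g(u+ir)|$ pointwise, and invoke Lemma \ref{lem:AsymptoticIntegral} with $\sigma=\mu_{0}t$, $A=(1-r)^{2}$, $p=\alpha-\beta+\frac{1}{2}$. The only difference is that you supply a compactness justification for the uniform lower bound on $|z(u+ir)^{\alpha}-\lambda|$, which the paper simply posits as the existence of a constant $\hat{M}$.
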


\begin{proof}
By replacing (\ref{eq:ParabolicContour}) and $z'(u) = 2 \mu_{0} (i-u)$ in (\ref{eq:IntegrandFunction}), we preliminarily obtain
%\[
%	g(u) = 2 i \mu^{\alpha-\beta+1} e^{\mu (iu+1)^2 t} \frac{ (iu+1)^{2(\alpha-\beta)+1}}{z(u)^{\alpha}-\lambda}
%\]
%it is clearly
\[
	g(u+ir) = 2 i \mu_{0}^{\alpha-\beta+1} e^{\mu_{0} ((1-r) +iu)^2 t} \frac{ ((1-r) +iu)^{2(\alpha-\beta)+1}}{z(u+ir)^{\alpha}-\lambda} .
\]

Since $\lambda \not=0$ (see Remark \ref{rmk:lambdanot0}), it is natural to assume the existence of a positive $\hat{M}$ such that $\left| z(u + ir)^{\alpha}-\lambda \right| \ge \hat{M}$ for any $r \in [0,c^{\star}_{0})$. Hence
\[
	\bigl| g(u + ir) \bigr| 
		\le 2 \mu^{\alpha-\beta+1} e^{\mu_{0} (1 - r)^2 t} \hat{M} 
		e^{-\mu_{0} u^2 t} \left(u^2 + (1 - r)^2\right)^{\alpha-\beta+1/2}
\]
and, after putting for shortness $\bar{M}_{+} = 2 \mu_{0}^{\alpha-\beta+1} e^{\mu_{0} t} \hat{M}$, we have for any $c_{0} < 1$
\[
	M_{+}(c_{0}) \le \bar{M}_{+} \cdot \tilde{M}(c_{0})
	,  \quad
	\tilde{M}(c_{0}) 
	%= \max_{0\le r \le c]} \int_{-\infty}^{\infty} 	\bigl| g(u + ir) \bigr| \, du
	= \max_{0\le r \le c_{0}} \int_{-\infty}^{\infty} e^{-\mu_{0} u^2 t} \left(u^2 + (1 - r)^2\right)^{\alpha-\beta+1/2} \, du	.
\]
%where for any $c_{0} < 1$ it is
%\[
%	\tilde{M}_{+}(c_{0}) 
%	= \max_{0\le r \le c_{0}} \int_{-\infty}^{\infty} e^{-\mu_{0} u^2 t} \left(u^2 + (1 - r)^2\right)^{\alpha-\beta+1/2} %\, du	.
%\]

The proof now follows after applying Lemma \ref{lem:AsymptoticIntegral}. \qquad
\end{proof}

With $\lambda$ very close to $0$, in the above proof it is possible that $\hat{M}\ll 1$, thus affecting the asymptotic estimation for $\tilde{M}(c)$. In this case, we are in the presence of a very narrow region $R_{0}$ which, as we will discuss later in the final part of subsection \ref{SS:QuadratureBoundedRight}, must be discarded since it does not allow one to achieve the assigned tolerance. For this reason, we do not consider the effects on $\tilde{M}(c)$ of a possibly very small  $\hat{M}$.

We now consider the regions $R_{j}$, $j=1,\dots,J$, which are bounded to the left by one of the poles $s^{\star}_j$ of ${\cal E}_{\alpha,\beta}(s;\lambda)$ except the origin (i.e., $\varphi(s^{\star}_j) > 0$).

\begin{lemma}\label{lem:Maggiorazioni}
Let $a,b\in \Rset$, with $b>a>0$. For $0 < \alpha \le 1$ it is 	$b^{\alpha} - a^{\alpha} \ge \alpha b^{\alpha-1} (b-a)$ and for $\alpha > 1$ it is $b^{\alpha} - a^{\alpha} \ge \alpha a^{\alpha-1} (b-a)$.
\end{lemma}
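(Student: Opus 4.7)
The plan is to prove both inequalities by a single application of the Mean Value Theorem to $f(x)=x^{\alpha}$ on $[a,b]$, followed by an elementary monotonicity argument on the power $x^{\alpha-1}$ whose direction depends on the sign of $\alpha-1$.

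First I would invoke the MVT: since $f(x)=x^{\alpha}$ is continuous on $[a,b]$ and differentiable on $(a,b)$ with $f'(x)=\alpha x^{\alpha-1}$, there exists $c\in(a,b)$ such that
\[
  b^{\alpha}-a^{\alpha} \;=\; \alpha\, c^{\alpha-1}(b-a).
\]
Since $b>a>0$, the factor $\alpha(b-a)$ is positive, so the whole problem reduces to comparing $c^{\alpha-1}$ with $b^{\alpha-1}$ in the first case and with $a^{\alpha-1}$ in the second.

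Next I would treat the two regimes separately. If $0<\alpha\le 1$, then $\alpha-1\le 0$, so on $(0,\infty)$ the map $x\mapsto x^{\alpha-1}$ is non-increasing; from $c<b$ this gives $c^{\alpha-1}\ge b^{\alpha-1}$, and substitution into the MVT identity yields the first inequality
$b^{\alpha}-a^{\alpha}\ge \alpha b^{\alpha-1}(b-a)$.
If instead $\alpha>1$, then $\alpha-1>0$ and $x\mapsto x^{\alpha-1}$ is strictly increasing on $(0,\infty)$; from $c>a$ this gives $c^{\alpha-1}\ge a^{\alpha-1}$, hence $b^{\alpha}-a^{\alpha}\ge \alpha a^{\alpha-1}(b-a)$, which is the second inequality. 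The boundary case $\alpha=1$ is consistent, both sides collapsing to $b-a$.

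There is no real obstacle here: the argument is a two-line application of MVT plus monotonicity. One could equivalently derive the two statements from concavity of $x^{\alpha}$ for $\alpha\in(0,1]$ (tangent-line inequality at $b$) and convexity for $\alpha>1$ (tangent-line inequality at $a$), but the MVT route keeps everything on a single footing and avoids invoking second-derivative sign conditions as a separate fact.
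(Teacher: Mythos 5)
Your proof is correct and rests on the same essential fact as the paper's: the monotonicity of $s^{\alpha-1}$ on $[a,b]$, whose direction depends on the sign of $\alpha-1$. The paper writes $b^{\alpha}-a^{\alpha}=\alpha\int_{a}^{b}s^{\alpha-1}\,ds$ and bounds the integrand by its value at the appropriate endpoint, while you reach the same endpoint comparison via the Mean Value Theorem; the two routes are interchangeable here.
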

\begin{proof}
For $0 < \alpha \le 1$ it is immediate to verify that
\[
	b^{\alpha} - a^{\alpha} = \alpha \int_{a}^{b} s^{\alpha-1} \, ds \ge \alpha \int_{a}^{b} b^{\alpha-1} \, ds 
	= \alpha b^{\alpha-1} (b-a)
\]
and in a similar way the proof follows for $\alpha > 1$. \qquad
\end{proof}

\begin{proposition}\label{prop:Mcpuls2}
Let $j\in\{1,\dots,J\}$, $\mu_{j}>0$ such that $\varphi(s^{\star}_j)<\mu_{j}<\varphi(s^{\star}_{j+1})$  and $c_{j}^{\star}$ the upper bound of the strip of analyticity (\ref{eq:RegionConformalMap}) corresponding to $R_{j}$. For any $c_{j} < c_{j}^{\star}$ there exists $\bar{M}_{+}>0$ (independent of  $c_{j}$) such that 
\[
	M_{+}(c_{j}) < \bar{M}_{+} \cdot ( c_{j}^{\star} - c_{j})^{-1} .
\]
\end{proposition}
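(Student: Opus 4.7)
My plan is to mirror Proposition \ref{prop:Mcpuls1} as closely as possible, the one essential novelty being that the left boundary of $R_j$ now carries an honest pole $s_j^{\star}$ of ${\cal E}_{\alpha,\beta}$ instead of the branch point at the origin. I would substitute $z(u+ir)=\mu_j((1-r)+iu)^2$ and $z'(u+ir)=2i\mu_j((1-r)+iu)$ into (\ref{eq:IntegrandFunction}), take moduli, and extract the Gaussian factor to obtain
\[
	|g(u+ir)|\le 2\mu_j^{\alpha-\beta+1}e^{\mu_j(1-r)^2 t}\,e^{-\mu_j u^2 t}\,\frac{\bigl((1-r)^2+u^2\bigr)^{\alpha-\beta+1/2}}{|z(u+ir)^{\alpha}-\lambda|}.
\]
Writing $w_j^{\star}=v_j^{\star}+i c_j^{\star}$ for the unique preimage of $s_j^{\star}$ under $z$ lying in the strip $\Im(w)<1$ (the other preimage sits above $\Im(w)=1$ by the symmetry $z(w)=z(2i-w)$), the whole task reduces to a lower bound for $|z(u+ir)^{\alpha}-\lambda|$ on the horizontal slices $\Im(w)=r\in[0,c_j]$ that captures how the distance to $w_j^{\star}$ degenerates as $r\to c_j^{\star}$.

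Because $s_j^{\star}\ne 0$ is a simple zero of $s^{\alpha}-\lambda$ and $z'(w_j^{\star})\ne 0$ (since $w_j^{\star}\ne i$), I would Taylor-expand at $w_j^{\star}$ to factor $z(w)^{\alpha}-\lambda=(w-w_j^{\star})G(w)$ with $G$ analytic in a full neighborhood of the closed upper half-strip and $G(w_j^{\star})=\alpha(s_j^{\star})^{\alpha-1}z'(w_j^{\star})\ne 0$. Choosing a radius $\delta_0>0$ (depending only on $\alpha,\lambda,\mu_j$, not on $c_j$) such that $|G|\ge|G(w_j^{\star})|/2$ on $|w-w_j^{\star}|\le\delta_0$ yields
\[
	|z(u+ir)^{\alpha}-\lambda|\ge c_G \sqrt{(u-v_j^{\star})^2+(c_j^{\star}-r)^2}\quad\textrm{for}\ |u-v_j^{\star}|\le\delta_0,\,0\le r\le c_j^{\star}.
\]
Outside that disk, $|z^{\alpha}-\lambda|$ is continuous and strictly positive on the closed set $\{|u-v_j^{\star}|\ge\delta_0,\,0\le\Im(w)\le c_j^{\star}\}$ and blows up like $\mu_j^{\alpha}|u|^{2\alpha}$ as $|u|\to\infty$, so a compactness argument supplies a uniform lower bound $|z^{\alpha}-\lambda|\ge m>0$ there, again with $m$ independent of $c_j$.

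The proof would then conclude by splitting $\int_{-\infty}^{\infty}|g(u+ir)|du$ at $|u-v_j^{\star}|=\delta_0$. On the far piece the Gaussian $e^{-\mu_j u^2 t}$ dominates the polynomial factor $((1-r)^2+u^2)^{\alpha-\beta+1/2}$ uniformly in $r\in[0,c_j^{\star}]$, so the contribution is an $r$-independent constant. On the near piece the numerator is uniformly bounded and
\[
	\int_{|u-v_j^{\star}|\le\delta_0}\frac{du}{\sqrt{(u-v_j^{\star})^2+(c_j^{\star}-r)^2}}=2\sinh^{-1}\!\Bigl(\frac{\delta_0}{c_j^{\star}-r}\Bigr)\le \frac{C}{c_j^{\star}-r},
\]
the last step following from the fact that $x\mapsto x\log(1/x)$ is bounded on $(0,1]$. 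Maximising over $r\in[0,c_j]$ and absorbing all $c_j$-independent constants into a single $\bar{M}_+$ then gives the claim.

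The main obstacle I would expect is that a uniform factorization $|z^{\alpha}-\lambda|\gtrsim|w-w_j^{\star}|$ does \emph{not} extend across the full horizontal slice: when $\alpha<1/2$ the quotient $G$ actually tends to $0$ at infinity like $|w|^{2\alpha-1}$, so a purely multiplicative bound is unavailable globally. The near/far split above is designed precisely to dodge this, using the Taylor control only in a fixed small disk around the pole and relying on Gaussian decay (rather than on growth of $|z^{\alpha}-\lambda|$) to tame the large-$|u|$ tail.
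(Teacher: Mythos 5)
Your argument is correct and reaches the stated bound, but it takes a genuinely different route from the paper. The paper never localizes at the preimage $w_j^{\star}$: it bounds the denominator \emph{uniformly in $u$} on each horizontal slice by observing (via Proposition \ref{prop:PositionPointParabola}) that $\lambda=(s_j^{\star})^{\alpha}$ lies on the image under $s\mapsto s^{\alpha}$ of the parabola with parameter $\varphi(s_j^{\star})$ while $(z_j(u+ir))^{\alpha}$ lies on the image of the parabola with parameter $\mu_j(1-r)^2$, so that $|(z_j(u+ir))^{\alpha}-\lambda|\ge|\mu_j^{\alpha}(1-r)^{2\alpha}-(\varphi(s_j^{\star}))^{\alpha}|$; Lemma \ref{lem:Maggiorazioni} then converts this into the uniform lower bound $Q_{\alpha,j}(c_j^{\star}-c_j)$ with a fully explicit constant, after which the Gaussian integral of the numerator is simply finite. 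Your local simple-zero factorization plus the near/far split actually proves something sharper: the near-field integral is only $O\bigl(\log(1/(c_j^{\star}-c_j))\bigr)$, so the exponent $p_j=1$ implied by the paper's proposition is pessimistic (which would feed back favourably into the choice of $\bar{f}$ and $N_j$ in Section \ref{SSS:BalancigX1unbounded}); the price is that your constants $\delta_0$, $c_G$ and the far-field minimum $m$ come from continuity and compactness rather than from closed formulas, whereas the algorithm needs computable values, and the paper's uniform-in-$u$ bound extends verbatim to the three-parameter denominator $(s^{\alpha}-\lambda)^{\gamma}$ by raising it to the power $\gamma$. Two small patches you should make explicit: your displayed lower bound $\ge c_G\sqrt{(u-v_j^{\star})^2+(c_j^{\star}-r)^2}$ is derived on the disk $|w-w_j^{\star}|\le\delta_0$ but asserted on the rectangle $|u-v_j^{\star}|\le\delta_0$, $0\le r\le c_j^{\star}$, so on the rectangle minus the disk you need the (easy) extra observation that $|z^{\alpha}-\lambda|$ has a positive minimum there while $|w-w_j^{\star}|$ stays bounded; and when $\lambda$ generates a conjugate pair of poles with equal $\varphi$ (a case the paper's strict ordering of the $\varphi(s_j^{\star})$ quietly excludes) there are two preimages on the line $\Im(w)=c_j^{\star}$ and the factorization must be performed at both.
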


\begin{proof}
Let $c_{j} < c_{j}^{\star}$ and consider $r \in [0,c_{j}]$. By Proposition \ref{prop:PositionPointParabola}, $s_{j}^{\star}$ lies on the parabola $z^{\star}_{j}(u) = \varphi(s^{\star}_j)(iu+1)^2$ and hence $\lambda=(s_{j}^{\star})^{\alpha}$ lies on the curve $\bigl(z^{\star}_{j}(u)\bigr)^{\alpha} = \bigl(\varphi(s^{\star}_j)\bigr)^{\alpha}(iu+1)^{2\alpha}$. The distance between $(z_{j}(u+ir))^{\alpha}$ and $\lambda$ is therefore greater than the distance between $(z_{j}(u+ir))^{\alpha}$ and $\bigl(z^{\star}_{j}(u)\bigr)^{\alpha}$ evaluated at $u=0$, i.e. 
\[
	\bigl|(z_{j}(u+ir))^{\alpha}-\lambda\bigr| 
	\ge \left| \mu_{j}^{\alpha} \bigl(1-r\bigr)^{2\alpha} - \bigr(\varphi(s^{\star}_{j})\bigl)^{\alpha} \right|  .
\]

Since $\varphi(s^{\star}_{j}) < \mu_{j} \bigl(1-r\bigr)^2$, Lemma \ref{lem:Maggiorazioni} yields
\[
	\bigl|(z_{j}(u+ir))^{\alpha}-\lambda\bigr| 
	\ge \alpha P_{\alpha,j} \left( \mu_{j} \bigl(1-r\bigr)^2 - \varphi(s^{\star}_{j}) \right) ,
\]
where $P_{\alpha,j}=\bigr(\varphi(s^{\star}_{j})\bigl)^{\alpha-1}$ when $0 < \alpha < 1$ and $P_{\alpha,j} = \mu_{j}^{\alpha-1} \bigl(1-r\bigr)^{2\alpha-2}$ when $\alpha > 1$. It is elementary to see that for any $r \le c_{j} < c_{j}^{\star}$ it is
\begin{eqnarray*}
	\mu_{j} \bigl(1-r\bigr)^2 - \varphi(s^{\star}_{j}) 
	&=& \mu_{j} \left( \bigl(1-r\bigr)^2 - \frac{\varphi(s^{\star}_{j})}{\mu_{j}} \right) 
	 = \mu_{j} \left( \bigl(1-r\bigr)^2 - (1-c_{j}^{\star})^2 \right)  \\
	&=& \mu_{j} ( c_{j}^{\star} - r) (2- r - c_{j}^{\star}) 
	 >  2 \mu_{j} ( c_{j}^{\star} - c_{j}) (1 - c_{j}^{\star}) \
\end{eqnarray*}

Since form (\ref{eq:BoundsCstarDstar}) $\varphi(s^{\star}_{j})/\mu_{j} = (1-c_{j}^{\star})^2$, when $0<\alpha<1$ we can easily verify that
\[
	\bigl|(z_{j}(u+ir))^{\alpha}-\lambda\bigr| 
	> 2 \alpha \bigr(\varphi(s^{\star}_{j})\bigl)^{\alpha-1} \mu_{j} ( c_{j}^{\star} - c_{j}) (1 - c_{j}^{\star})
	= 2 \alpha \bigr(\varphi(s^{\star}_{j})\bigl)^{\alpha} ( c_{j}^{\star} - c_{j}) (1 - c_{j}^{\star})^{-1}, 
\]
while, for $\alpha > 1$, it is instead
\[
	\bigl|z_{j}(u+ir)^{\alpha}-\lambda\bigr| 
	> 2 \alpha \mu_{j}^{\alpha} ( c_{j}^{\star} - c_{j}) (1 - c_{j}^{\star})^{2\alpha-1} .
\]

We denote with $Q_{\alpha,j}$ the constant independent of $c_{j}$
\[
	Q_{\alpha,j} = \left\{ \begin{array}{ll}
		2 \alpha \bigr(\varphi(s^{\star}_{j})\bigl)^{\alpha} (1 - c_{j}^{\star})^{-1} & \textrm{if} \, 0 < \alpha < 1 \\
		2 \alpha \mu_{j}^{\alpha}  (1 - c_{j}^{\star})^{2\alpha-1} \quad & \textrm{if} \, \alpha > 1 \
	\end{array}\right.
\]
and write the inequality
\[
	\bigl|(z_{j}(u+ir))^{\alpha}-\lambda\bigr| > Q_{\alpha,j} ( c_{j}^{\star} - c_{j})
\]
which allows to obtain
\[
	M_{+}(c_{j}) = \max_{0\le r\le c_{j}} \int_{-\infty}^{\infty} |g(u+ir)|du 
	< \bar{M}_{+} ( c_{j}^{\star} - c_{j})^{-1}, 
\]
with $\bar{M}_{+} = 2 \mu_{j}^{\alpha-\beta+1} e^{\mu_{j} t} \hat{M}_{+}$ and
\[
	\hat{M}_{+} = \frac{1}{Q_{\alpha,j}}  \max_{0 \le r \le c_{j}^{\star}} \hat{M}(r)
	, \quad
	\hat{M}(r) = \int_{-\infty}^{\infty} e^{-\mu u^2 t} \left(u^2 + (1 - r)^2\right)^{\alpha-\beta+1/2} \, du
\]
being $\hat{M}(r) < + \infty$ since $r \le c_{j}^{\star} < 1$. \qquad
\end{proof}

Also for $M_{-}(d_{j})$ it is necessary to distinguish two cases: when the computation is performed in a region $R_{j}$, $j=0,\dots,J-1$, bounded to the right and when the integration is instead performed in last region $R_{J}$. %In the first case we obtain a symmetric results with respect to Proposition \ref{prop:Mcpuls2} and therefore we omit the proof.

\begin{proposition}\label{prop:Mdpuls}
Let $j\in\{0,\dots,J-1\}$, $\mu_{j}>0$ such that $\varphi(s^{\star}_j)<\mu_{j}<\varphi(s^{\star}_{j+1})$  and $d_{j}^{\star}$ the lower bound of the strip of analyticity (\ref{eq:RegionConformalMap}) corresponding to $R_{j}$. For any $d_{j} < d_{j}^{\star}$ there exists $\bar{M}_{-}>0$ (independent of  $d_{j}$) such that 
\[
	M_{-}(d_{j}) < \bar{M}_{-} \cdot ( d_{j}^{\star} - d_{j})^{-1} .
\]
\end{proposition}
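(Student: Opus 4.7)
The plan is to mirror the proof of Proposition \ref{prop:Mcpuls2}, adapting it to the lower side of the strip of analyticity. The geometric setup is symmetric: here the region $R_{j}$ is bounded on the right by the pole $s^{\star}_{j+1}$ with $(s^{\star}_{j+1})^{\alpha}=\lambda$, and $s^{\star}_{j+1}$ lies on the parabola with parameter $\varphi(s^{\star}_{j+1})>\mu_{j}$. Along the shifted line $u-ir$, with $r\in[0,d_{j}]$, the contour evaluates to $z_{j}(u-ir)=\mu_{j}\bigl((1+r)+iu\bigr)^{2}$, which traces a parabola with parameter $\mu_{j}(1+r)^{2}$; the definition of $d_{j}^{\star}$ in (\ref{eq:BoundsCstarDstar}) is precisely $\mu_{j}(1+d_{j}^{\star})^{2}=\varphi(s^{\star}_{j+1})$, so the shifted parabola stays strictly inside $R_{j}$ for $r<d_{j}^{\star}$ and grazes the right boundary singularity only in the limit.

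The key steps I would carry out are: (i) substitute (\ref{eq:ParabolicContour}) and $z'_{j}(u)=2\mu_{j}(i-u)$ into (\ref{eq:IntegrandFunction}) with $u$ replaced by $u-ir$, exactly as in the proof of Proposition \ref{prop:Mcpuls1}, to get a factor $\mu_{j}^{\alpha-\beta+1}e^{\mu_{j}(1+r)^{2}t}\bigl(u^{2}+(1+r)^{2}\bigr)^{\alpha-\beta+1/2}$ in the numerator; (ii) bound the denominator $|z_{j}(u-ir)^{\alpha}-\lambda|$ from below by the distance on the image curve $(z^{\star}_{j+1}(u))^{\alpha}$ evaluated at $u=0$, obtaining
\[
\bigl|z_{j}(u-ir)^{\alpha}-\lambda\bigr|\ \ge\ \bigl(\varphi(s^{\star}_{j+1})\bigr)^{\alpha}-\bigl(\mu_{j}(1+r)^{2}\bigr)^{\alpha};
\]
(iii) apply Lemma \ref{lem:Maggiorazioni} to the right--hand side with $b=\varphi(s^{\star}_{j+1})$ and $a=\mu_{j}(1+r)^{2}$, producing the linear factor
\[
b-a\ =\ \mu_{j}\bigl((1+d_{j}^{\star})^{2}-(1+r)^{2}\bigr)\ =\ \mu_{j}(d_{j}^{\star}-r)(2+d_{j}^{\star}+r)\ >\ 2\mu_{j}(d_{j}^{\star}-d_{j}).
\]

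The remaining step is to collect constants in a branch-wise manner: for $0<\alpha<1$ the leading coefficient becomes $\alpha(\varphi(s^{\star}_{j+1}))^{\alpha-1}$, and for $\alpha>1$ it becomes $\alpha(\mu_{j}(1+r)^{2})^{\alpha-1}$, which on $r\in[0,d_{j}^{\star}]$ is bounded below by $\alpha\mu_{j}^{\alpha-1}$ (since $1+r\ge 1$); in both cases one packages these into a constant $Q_{\alpha,j}$ independent of $d_{j}$ so that $|z_{j}(u-ir)^{\alpha}-\lambda|>Q_{\alpha,j}(d_{j}^{\star}-d_{j})$. The Gaussian factor $e^{-\mu_{j}u^{2}t}$ then makes the integral in $u$ converge to a finite quantity $\hat{M}(r)$ which is uniformly bounded for $r\in[0,d_{j}^{\star}]$, exactly as at the end of the proof of Proposition \ref{prop:Mcpuls2}.

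The main obstacle is the geometric inequality in step (ii): I am asserting that as $u$ varies the distance $|z_{j}(u-ir)^{\alpha}-\lambda|$ attains its minimum (or at least is controlled from below) at $u=0$. The reason is that $\lambda$ lies on the image of the outer parabola $z^{\star}_{j+1}$ under the power map, and the two nested parabolic images do not intersect for $r<d_{j}^{\star}$ (they meet tangentially at $u=0$ in the limit $r\to d_{j}^{\star}$); any rigorous write--up needs to justify this via the monotonicity of $|z_{j}(u-ir)^{\alpha}|=\mu_{j}^{\alpha}(u^{2}+(1+r)^{2})^{\alpha}$ in $u$ together with the analogous behaviour of $|(z^{\star}_{j+1}(u))^{\alpha}|$, ruling out that the non-radial components of $z^{\alpha}$ can bring the two curves closer at some $u\neq 0$. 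This is handled in the same spirit as in Proposition \ref{prop:Mcpuls2} and requires no new ideas, only care with the branch of $z^{\alpha}$ inside the region of analyticity $R_{j}$.
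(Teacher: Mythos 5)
Your proposal is correct and follows exactly the route the paper intends: the paper's own proof of Proposition \ref{prop:Mdpuls} simply states that the argument is symmetric to that of Proposition \ref{prop:Mcpuls2} and records the constant $\bar{M}_{-} = 2 \mu_{j}^{\alpha-\beta+1} e^{\mu_{j} (1+d_{j}^{\star})^2 t} \hat{M}_{-}$, and your write-up is precisely that symmetric argument carried out in detail, with the roles of $a$ and $b$ in Lemma \ref{lem:Maggiorazioni} correctly swapped and the exponential factor $e^{\mu_{j}(1+r)^{2}t}$ correctly majorized at $r=d_{j}^{\star}$. The geometric lower bound at $u=0$ that you flag as the delicate point is treated with the same level of informality in the paper's proof of Proposition \ref{prop:Mcpuls2}, so you introduce no gap beyond what the paper itself accepts.
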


\begin{proof}
The prof is symmetric to the proof of Proposition \ref{prop:Mcpuls2} and we omit the details. We just point out that the term $\bar{M}_{-}$ is now given by $\bar{M}_{-} = 2 \mu_{j}^{\alpha-\beta+1} e^{\mu_{j} (1+d_{j}^{\star})^2t} \hat{M}_{-}$ with $\hat{M}_{-}$ obtained in a similar way as $\hat{M}_{+}$.
\end{proof}

%After highlighting the exponential growing factor in $\bar{M}_{-}$ and using (\ref{eq:BoundsCstarDstar}), in the first (right--bounded) $J$ regions we face with a discretization error $DE_{-}$ proportional to $( d_{j}^{\star} - d_{j})^{-1} e^{\frac{-2\pi d_j}{h_j} + \varphi(s^{\star}_{j+1}) t}$.

The discussion for $M_{-}(d_{J})$ in the last (right--unbounded) region $R_{J}$ is the same as proposed in \cite{WeidemanTrefethen2007}. We just recall that an upper bound for $M_{-}(d_{J})$ is achieved at $d_{J}=\pi/(\mu_{J} t h_{J}) -1$ which allows to write 
\begin{equation}\label{eq:DE_LastRegion}
	DE_{-}(d_{J}) = {\cal O}\bigl(e^{\pi^2/(\mu_{J} t h_{J}^2) + 2\pi/h_{J}} \bigr), \quad h_{J} \to 0 .
\end{equation}

\subsection{Evaluation of the quadrature parameters}\label{SSS:BalancigX1unbounded}

Thanks to the analysis carried out in Subsection \ref{SS:EstimationMcMd}, and after highlighting the exponential growing factor in $\bar{M}_{-}$, some upper bounds for the discretization errors are now available in the form
\[
	DE_{+}(c_{j}) \le \bar{M}_{+} (c_{j}^{\star} - c_{j})^{-p_{j}} e^{-2\pi c_{j}^{\star} /h_{j}}
	, \quad
	DE_{-}(d_{j}) \le \bar{M}_{-} (d_{j}^{\star} - d_{j})^{-q_{j}} e^{-2\pi d_{j}^{\star} /h_{j} + \varphi(s^{\star}_{j+1}) t}
\]
for $c_{j}<c_{j}^{\star}$, $d_{j}<d_{j}^{\star}$ and some nonnegative values $p_{j}$ and $q_{j}$; only in the last region $R_{J}$ a different result applies for $DE_{-}(d_{j})$, according to (\ref{eq:DE_LastRegion}). 

Unless $p_{j}=0$ and $q_{j}=0$, the presence of the algebraic terms $(c_{j}^{\star} - c_{j})^{-p_{j}}$ and $(d_{j}^{\star} - d_{j})^{-q_{j}}$ cannnot be disregarded; they can be indeed very large and an unfit selection of $c_{j}$ and $d_{j}$ can lead to an incorrect error analysis as already observed in the first part of \cite{GarrappaPopolizio2013}.

The task of including, in the error analysis, the contribution of a possibly large algebraic term was accomplished in \cite{GarrappaPopolizio2013} by introducing an auxiliary variable and expressing the parameters in the formula for the numerical inversion of the LT in terms of this variable; the optimal value of the auxiliary variable was hence selected by minimizing the number of quadrature nodes in order to keep the computational effort at a minimum. Numerical experiments showed that despite the nonnegligible computation required by finding the minimum of a nonlinear function, the overall computation was the same performed in an efficient way. 

The work in \cite{GarrappaPopolizio2013} was anyway devoted to the evaluation of the ML function (\ref{eq:GeneralizedML}) on the real negative semiaxis and, mainly, for $0<\alpha<1$, thus involving just one singularity, namely at origin. In the more general context this approach is no more feasible: since most of the regions $R_{j}$ have two distinct singularities, the introduction of two auxiliary variables leads to the need of finding the minimum of a nonlinear function with two variables, a problem whose solution can be quite expensive.

We propose here a completely different approach to take into account the algebraic terms in $DE_{+}(c_{j})$ and $DE_{-}(d_{j})$. To this purpose we distinguish again two main cases: the case of a bounded to the right region $R_{j}$ (i.e., $j=0,\dots,J-1$) and the case of the right--unbounded region (i.e., the last region $R_{J}$).

\subsubsection{Quadrature parameters in a region bounded to the right}\label{SS:QuadratureBoundedRight}

The most straightforward way to prevent the possible growth of the terms $(c^{\star}_j - c_{j})^{-p_{j}}$ and $(d^{\star}_j - d_{j})^{-q_{j}}$ in the discretization errors $DE_{+}(c_j)$ and $DE_{-}(d_j)$ is by forcing $\mu_{j}$ to belong to a subinterval $[\bar{\varphi}_{j}^{\star} , \bar{\varphi}_{j+1}^{\star}]$ instead of $[\varphi(s_{j}^{\star}) , \varphi(s_{j+1}^{\star})]$, with 
\begin{equation}\label{eq:EssentialConstraint}
	\varphi(s_{j}^{\star}) \le \bar{\varphi}_{j}^{\star} < \bar{\varphi}_{j+1}^{\star} < \varphi(s_{j+1}^{\star}) 
\end{equation}
(the equality in (\ref{eq:EssentialConstraint}) is introduced just to cover the case in which $p_{j}=0$). Under the conformal map $z_{j}(w) = \mu_{j}(iw+1)^2$, the strip $-\bar{d}_{j}^{\star} < \Im(w) < \bar{c}_{j}^{\star}$ corresponding to the region determined by the parabolas through $\bar{\varphi}_{j}^{\star}$ and $\bar{\varphi}_{j+1}^{\star}$ is given by
\begin{equation}\label{eq:BoundsReducedRegion}
	\bar{c}_{j}^{\star} = 1 - \sqrt{\frac{\bar{\varphi}_{j}^{\star}}{\mu_{j}}} < c_{j}^{\star}
	, \quad
	\bar{d}_{j}^{\star} = \sqrt{\frac{\bar{\varphi}_{j+1}^{\star}}{\mu_{j}}} - 1 < d_{j}^{\star}.
\end{equation}

A suitable strategy is to fix an arbitrary value $\bar{f}>1$ and select $\bar{\varphi}_{j}^{\star}$ and $\bar{\varphi}_{j+1}^{\star}$ by forcing
\begin{equation}\label{eq:ForceFbar}
	(c^{\star}_j - \bar{c}_{j}^{\star})^{-p_{j}} = (d^{\star}_j - \bar{d}_{j}^{\star})^{-q_{j}} = \bar{f}
\end{equation}

Whenever (\ref{eq:ForceFbar}) can be imposed without a large value of $\bar{f}$, for instance $\bar{f}\approx1$, the terms $(c^{\star}_j - \bar{c}_{j}^{\star})^{-p_{j}}$ and $(d^{\star}_j - \bar{d}_{j}^{\star})^{-q_{j}}$ can be neglected in $DE_{+}(c_{j})$ and $DE_{-}(d_{j})$. Within very narrow regions $R_{j}$ it cannot be possible to satisfy (\ref{eq:ForceFbar}) for sufficiently small values of $\bar{f}$; the accuracy $\varepsilon$ must be therefore scaled as $\bar{\varepsilon}=\varepsilon/\bar{f}$ before removing $(c^{\star}_j - \bar{c}_{j}^{\star})^{-p_{j}}$ and $(d^{\star}_j - \bar{d}_{j}^{\star})^{-q_{j}}$ from the discretization errors. Obviously, this scaling also has an effect on the truncation error $TE$ and leads to slightly increase the number $N_j$ of quadrature nodes.

The asymptotic balancing of the different components of the error now reads 
\begin{equation}\label{eq:Balancing1}
	-\frac{2 \pi \bar{c}_{j}^{\star}}{h_{j}} = -\frac{2 \pi \bar{d}_{j}^{\star}}{h_{j}} + \bar{\varphi}_{j+1}^{\star} t
	= \mu_{j} t \bigl(1 - h_{j}^2 N_{j}^2 \bigr) = \log \bar{\varepsilon}
\end{equation}
from which it is immediate to obtain 
\begin{equation}\label{eq:muj_BoundedRegion}
	\mu_{j} = \left( \frac{(1+w) \sqrt{\bar{\varphi}_{j}^{\star}} + \sqrt{\bar{\varphi}_{j+1}^{\star}}}{2+w} \right)^2
	, \quad
	w = - \frac{\bar{\varphi}_{j+1}^{\star} t}{\log \bar{\varepsilon}} ,
\end{equation}
and
\[
	h_{j} 
	= -\frac{2\pi}{\log \bar{\varepsilon}} \cdot \frac{\sqrt{\bar{\varphi}_{j+1}^{\star}} - \sqrt{\bar{\varphi}_{j}^{\star}}}{(1+w)\sqrt{\bar{\varphi}_{j}^{\star}} + \sqrt{\bar{\varphi}_{j+1}^{\star}}} 
	, \quad
	N_{j} = \frac{1}{h_{j}} \sqrt{1 - \frac{\log \bar{\varepsilon}}{t \mu_{j} }} .
\]

An essential task is to select $\bar{f}$ small enough to make $(c^{\star}_j - \bar{c}_{j}^{\star} )^{-p_{j}}$ and $(d^{\star}_j - \bar{d}_{j}^{\star} )^{-q_{j}}$ negligible in $DE_{+}(c_{j})$ and $DE_{-}(d_{j})$ and, at the same time, satisfy (\ref{eq:EssentialConstraint}). To this purpose we explicitly represent $c^{\star}_j - \bar{c}_{j}^{\star}$ and $d^{\star}_j - \bar{d}_{j}^{\star}$ in terms of $\bar{\varphi}_{j}^{\star}$ and $\bar{\varphi}_{j+1}^{\star}$ as

\[
	c^{\star}_j - \bar{c}_{j}^{\star} 
	%= \sqrt{\frac{\bar{\varphi}_{j}^{\star}}{\mu_{j}}} - \sqrt{\frac{\varphi(s_{j}^{\star})}{\mu_{j}}}
	=  \frac{ (2+w) \left( \sqrt{\bar{\varphi}_{j}^{\star}} - \sqrt{\varphi(s_{j}^{\star})}\right) }{(1+w)\sqrt{\bar{\varphi}_{j}^{\star}} + \sqrt{\bar{\varphi}_{j+1}^{\star}}}
	, \quad
	d^{\star}_j - \bar{d}_{j}^{\star} 
	= \frac{ (2+w) \left( \sqrt{\varphi(s_{j+1}^{\star})} - \sqrt{\bar{\varphi}_{j+1}^{\star}} \right)}{(1+w)\sqrt{\bar{\varphi}_{j}^{\star}} + \sqrt{\bar{\varphi}_{j+1}^{\star}}} .
\]

The obvious assumption $\bar{f}>1$ is sufficient to ensure that $\varphi(s_{j}^{\star}) < \bar{\varphi}_{j}^{\star}$ and $\bar{\varphi}_{j+1}^{\star} < \varphi(s_{j+1}^{\star})$; anyway, a minimum threshold value $\bar{f}_{min}>1$ must be determined with the aim of fulfilling $\bar{\varphi}_{j}^{\star}< \bar{\varphi}_{j+1}^{\star}$ for $\bar{f}>\bar{f}_{min}$.

When $p_{j} = 0$, a simple computation allows to verify that $\varphi(s_{j}^{\star})=\bar{\varphi}_{j}^{\star}<\bar{\varphi}_{j+1}^{\star}$ for
\[
	\bar{f}_{min} = \left( \frac{\sqrt{\varphi(s_{j}^{\star})}}{\sqrt{\varphi(s_{j+1}^{\star})}-\sqrt{\varphi(s_{j}^{\star})}}\right)^{q_{j}} ; 
\]
in the more general case we provide the following result (note that in regions bounded to the right it is always $q_{j}\not=0$).

%whilst in the case $q_{j}=0$ the same computation leads to 
%\[
%	\bar{f}_{min} = \left( \frac{\sqrt{\varphi(s_{j+1}^{\star})}}{\sqrt{\varphi(s_{j+1}^{\star})}-\sqrt{\varphi(s_{j}^{\star})}}\right)^{p_{j}} 
%\]
%in order to $\bar{\varphi}_{j}^{\star} < \varphi(s_{j+1}^{\star})$. In the more general case we provide the following result.

\begin{proposition}
Let $p_{j},q_{j} > 0$ and $r_{j} = \max\{p_{j},q_{j}\}$. If 
\begin{equation}\label{eq:Boundfbar}
	\bar{f}>\bar{f}_{min} , \quad
	\bar{f}_{min} = \left(\frac{\sqrt{\varphi(s_{j}^{\star})}+\sqrt{\varphi(s_{j+1}^{\star})}}
	{\sqrt{\varphi(s_{j+1}^{\star})}-\sqrt{\varphi(s_{j}^{\star})}}\right)^{r_{j}} ,
\end{equation}
then $\bar{\varphi}_{j}^{\star}< \bar{\varphi}_{j+1}^{\star}$.
\end{proposition}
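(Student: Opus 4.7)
The plan is to reduce the inequality $\bar{\varphi}_{j}^{\star}<\bar{\varphi}_{j+1}^{\star}$ to an inequality purely in the known quantities $\varphi(s_{j}^{\star})$, $\varphi(s_{j+1}^{\star})$ and $\bar{f}$, by exploiting the explicit formula \eqref{eq:muj_BoundedRegion} for $\mu_{j}$. To streamline the algebra, I would set $A=\sqrt{\varphi(s_{j}^{\star})}$, $B=\sqrt{\varphi(s_{j+1}^{\star})}$, $X=\sqrt{\bar{\varphi}_{j}^{\star}}$, $Y=\sqrt{\bar{\varphi}_{j+1}^{\star}}$, and $\alpha_{p}=\bar{f}^{-1/p_{j}}$, $\alpha_{q}=\bar{f}^{-1/q_{j}}$. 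Combining \eqref{eq:ForceFbar} with the definitions of $c^{\star}_{j},\bar{c}^{\star}_{j}$ in \eqref{eq:BoundsCstarDstar}, \eqref{eq:BoundsReducedRegion} gives $c^{\star}_{j}-\bar{c}^{\star}_{j}=(X-A)/\sqrt{\mu_{j}}$, and analogously $d^{\star}_{j}-\bar{d}^{\star}_{j}=(B-Y)/\sqrt{\mu_{j}}$, so that the forcing relations become $X=A+\sqrt{\mu_{j}}\,\alpha_{p}$ and $Y=B-\sqrt{\mu_{j}}\,\alpha_{q}$.

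Next I would substitute these two expressions into $\sqrt{\mu_{j}}(2+w)=(1+w)X+Y$ from \eqref{eq:muj_BoundedRegion}, obtaining a linear equation for $\sqrt{\mu_{j}}$, namely
\[
\sqrt{\mu_{j}}\bigl(2+w-(1+w)\alpha_{p}+\alpha_{q}\bigr)=(1+w)A+B,
\]
which is solvable (and the denominator is positive) for $\bar{f}>1$. Writing $Y-X=B-A-\sqrt{\mu_{j}}(\alpha_{p}+\alpha_{q})$ and clearing denominators, the condition $Y>X$ becomes
\[
(B-A)\bigl(2+w-(1+w)\alpha_{p}+\alpha_{q}\bigr)>\bigl((1+w)A+B\bigr)(\alpha_{p}+\alpha_{q}).
\]
The key computational step, and the main obstacle, is to show that after expanding the products the coefficients reorganize so that both sides carry a common factor $(2+w)$ which cancels. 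A direct expansion yields $\alpha_{p}$-coefficient $(1+w)(B-A)+(1+w)A+B=(2+w)B$ and $\alpha_{q}$-coefficient $-(B-A)+(1+w)A+B=(2+w)A$, so the inequality collapses to the clean form
\[
\alpha_{p}B+\alpha_{q}A<B-A,
\]
independent of $w$ (and hence of $t$ and $\bar{\varepsilon}$).

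Finally, since $\bar{f}>1$ and $r_{j}=\max\{p_{j},q_{j}\}$, one has $\alpha_{p},\alpha_{q}\le \bar{f}^{-1/r_{j}}$, so a sufficient condition for the reduced inequality is $\bar{f}^{-1/r_{j}}(A+B)<B-A$, i.e.
\[
\bar{f}>\left(\frac{A+B}{B-A}\right)^{r_{j}}=\bar{f}_{min},
\]
which is exactly \eqref{eq:Boundfbar}. Thus whenever $\bar{f}>\bar{f}_{min}$ we get $Y>X$ and hence $\bar{\varphi}_{j}^{\star}<\bar{\varphi}_{j+1}^{\star}$, completing the proof. The only nontrivial step is the algebraic cancellation of the factor $(2+w)$; everything else is bookkeeping.
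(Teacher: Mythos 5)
Your proposal is correct and follows essentially the same route as the paper's proof: both use the forcing relations \eqref{eq:ForceFbar} together with \eqref{eq:muj_BoundedRegion} to express $\sqrt{\bar{\varphi}_{j}^{\star}}$ and $\sqrt{\bar{\varphi}_{j+1}^{\star}}$ linearly in $\sqrt{\varphi(s_{j}^{\star})}$, $\sqrt{\varphi(s_{j+1}^{\star})}$ (the paper via an explicit $2\times 2$ system, you via a single equation for $\sqrt{\mu_{j}}$), cancel the common factor $2+w$, and reduce the claim to the same key inequality $\bar{f}^{-1/p_{j}}\sqrt{\varphi(s_{j+1}^{\star})}+\bar{f}^{-1/q_{j}}\sqrt{\varphi(s_{j}^{\star})}<\sqrt{\varphi(s_{j+1}^{\star})}-\sqrt{\varphi(s_{j}^{\star})}$, which follows from \eqref{eq:Boundfbar} exactly as you argue. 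Your write-up is in fact cleaner than the paper's, whose displayed inequalities contain some evident subscript/fraction typos.
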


\begin{proof}
It is elementary to verify that $(c^{\star}_j - \bar{c}_{j}^{\star} )^{-p_{j}} = (d^{\star}_j - \bar{d}_{j}^{\star} )^{-q_{j}} = \bar{f}$ when $\bar{\varphi}_{j}^{\star}$ and $\bar{\varphi}_{j+1}^{\star}$ are obtained after solving the linear system
\[
	\left( \begin{array}{cc}
		2+w - (1+w)\bar{f}^{-1/p_{j}} & -\bar{f}^{-1/p_{j}} \\ \\
		(1+w)\bar{f}^{-1/q_{j}} & 2+w+\bar{f}^{-1/q_{j}} \\
	\end{array} \right)
	\left( \begin{array}{c}
		\sqrt{\bar{\varphi}_{j}^{\star}} \\ \\ \sqrt{\bar{\varphi}_{j+1}^{\star}} \\
	\end{array} \right)
	= (2+w) 	\left( \begin{array}{c}
		\sqrt{\varphi(s_{j}^{\star})} \\ \\ \sqrt{\varphi(s_{j+1}^{\star})}
	\end{array} \right)
\]
whose solutions can be explicitly formulated as 
\[
	\bar{\varphi}_{j}^{\star} = \left(\frac{(2+w+\bar{f}^{-1/q_{j}})\sqrt{\varphi(s_{j}^{\star})} + \bar{f}^{-1/p_{j}}\sqrt{\varphi(s_{j+1}^{\star})}}{2+w-(1+w)\bar{f}^{-1/p_{j}}+\bar{f}^{-1/q_{j}}}\right)^2
\]
and
\[
	\bar{\varphi}_{j+1}^{\star} = \left(\frac{ -(1+w)\bar{f}^{-1/q_{j}} \sqrt{\varphi(s_{j}^{\star})} + (2+w-(1+w)\bar{f}^{-1/p_{j}})\sqrt{\varphi(s_{j+1}^{\star})}}{2+w-(1+w)\bar{f}^{-1/p_{j}}+\bar{f}^{-1/q_{j}}}\right)^2 .
\]

By the hypothesis (\ref{eq:Boundfbar}) it is
\[
	\bar{f}^{-1/p_{j}} < \frac{\sqrt{\varphi(s_{j}^{\star})}+\sqrt{\varphi(s_{j+1}^{\star})}}
	{\sqrt{\varphi(s_{j+1}^{\star})}-\sqrt{\varphi(s_{j}^{\star})}} < 1
	, \quad
	\bar{f}^{-1/q_{j}} < \frac{\sqrt{\varphi(s_{j}^{\star})}+\sqrt{\varphi(s_{j+1}^{\star})}}
	{\sqrt{\varphi(s_{j+1}^{\star})}-\sqrt{\varphi(s_{j}^{\star})}} < 1
\]
and hence 
\begin{equation}\label{eq:tempBound}
	\bar{f}^{-1/q_{j}} \sqrt{\varphi(s_{j}^{\star})} + \bar{f}^{-1/p_{j}} \sqrt{\varphi(s_{j+1}^{\star})}
	< \sqrt{\varphi(s_{j+1}^{\star})} - \sqrt{\varphi(s_{j+1}^{\star})} .
\end{equation}

A simple computation allows to prove that
\[
	\bar{\varphi}_{j}^{\star} 
	= \bar{\varphi}_{j+1}^{\star} + (2+w)
		\frac{ \sqrt{\varphi(s_{j+1}^{\star})} - \sqrt{\varphi(s_{j+1}^{\star})} + \bar{f}^{-1/q_{j}} \sqrt{\varphi(s_{j}^{\star})} + \bar{f}^{-1/p_{j}} \sqrt{\varphi(s_{j+1}^{\star})}}
		     {2+w-(1+w)\bar{f}^{-1/p_{j}}+\bar{f}^{-1/q_{j}}}
\]
from which the proof follows after using (\ref{eq:tempBound}). \qquad
\end{proof}

In very narrow regions the value of $\varphi(s_{j}^{\star})$ can be very close to $\varphi(s_{j+1}^{\star})$ and the threshold $\bar{f}_{min}$ can be too large to assure the achievement of a small tolerance $\varepsilon>0$; in this case no contours can be selected and the region must be discarded.

\begin{remark}
A more conservative error analysis would take into account also the exponential growing term  $e^{\mu_j t}$ in $\bar{M}_{+}$ (see the proof of Propositions \ref{prop:Mcpuls1} and \ref{prop:Mcpuls2}). In this case, and by using for simplicity the upper bound $e^{\bar{\varphi}_{j+1}^{\star} t}$, the integration parameters are obtained after replacing $\log \bar{\varepsilon}$ with $\log \bar{\varepsilon}-\bar{\varphi}_{j+1}^{\star} t$  in the formulas for $w$ and $h_{j}$. This change however does not seem to offer substantial improvements since it actually exerts the effects in regions with large $\varphi(s_{j}^{\star})$ and $\varphi(s_{j+1}^{\star})$ which are normally discarded for accuracy reasons as we will discuss later on. 
\end{remark}

\subsubsection{Quadrature parameters in an unbounded region to the right}\label{SSS:BalancigX1unbounded_OLD}

In the last and right--unbounded region $R_{J}$ the balancing of the exponential factors of the errors leads to
\begin{equation}\label{eq:BalanceUnbounded}
	-\frac{2 \pi \bar{c}^{\star}_{J}}{h_{J}}  
	= - \frac{\pi^{2}}{\mu_{J}t h_{J}^2} + \frac{2\pi}{h_{J}}
	= \mu_{J} t \bigl(1 - h_{J}^2 N_{J}^2 \bigr) = \log \varepsilon ,
\end{equation}
where $\bar{c}^{\star}_{J}  < c^{\star}_{J}$ is selected according to (\ref{eq:BoundsReducedRegion}) 
for $\bar{\varphi}_{J}^{\star} > \varphi(s_{J}^{\star})$, from which we obtain 
\begin{equation}\label{eq:param_rapr}
	h_{J} = \frac{1 + 2\bar{c}^{\star}_{J}}{N_{J}}
	, \quad
	\mu_{J} = \frac{\pi N_{J}}{2 t (1+\bar{c}^{\star}_{J}) (1 + 2\bar{c}^{\star}_{J})} 
	, \quad
	N_{J} = - \frac{1+2\bar{c}^{\star}_{J}}{2\pi \bar{c}^{\star}_{J}} \log \varepsilon .
\end{equation}

Unfortunately, because of the implicit dependence on the unknown $\mu_{J}$, we cannot use (\ref{eq:BoundsReducedRegion}) to determine $\bar{c}^{\star}_{J}$. It is therefore necessary to formulate $h_{J}$, $\mu_{J}$ and $N_{J}$ directly in terms of $\bar{\varphi}_{J}^{\star}$ instead of $\bar{c}^{\star}_{J}$. Since it is
\begin{equation}\label{eq:mu_rapr}
	\mu_{J} = \frac{\bar{\varphi}_{J}^{\star}}{(1- \bar{c}^{\star}_{J})^2} ,
\end{equation}
by matching the two representations of $\mu_{J}$ in (\ref{eq:param_rapr}) and (\ref{eq:mu_rapr}) we obtain a second order algebraic equation with respect to $\bar{c}^{\star}_{J}$ whose unique solution satisfying $\bar{c}^{\star}_{J}<1$ is
\[
	\bar{c}^{\star}_{J} = \frac{3 + A - \sqrt{1 + 12 A}}{ A- 4}
	, \quad
	A = \frac{\pi N_{J}}{t \bar{\varphi}_{J}^{\star} }  .
\]

A straightforward manipulation leads to 
\[
	h_{J} = \frac{1}{N_{J}} \left( - \frac{3A}{4-A} - \frac{2 - 2 \sqrt{1 + 12A}}{4-A} \right)
	, \quad
	\mu_{J} = \frac{\bar{\varphi}_{J}^{\star} (4-A)^2}{(7 - \sqrt{1+12A})^2}
\]
and, after imposing $-{2 \pi \bar{c}^{\star}_{J}}/{h_{J}} = \log \varepsilon$, we obtain 
\[
	N_{J} 
	= - \frac{1+2\bar{c}^{\star}_{J}}{2\pi \bar{c}^{\star}_{J}} \log \varepsilon
	= \frac{\bar{\varphi}_{J}^{\star} t}{\pi} \left( 1 - \frac{3 \log \varepsilon}{2 \bar{\varphi}_{J}^{\star} t} + \sqrt{ 1 - 2 \frac{ \log \varepsilon}{ t \bar{\varphi}_{J}^{\star} }} \right) .
\]

A direct evaluation of a suitable value for $\bar{\varphi}_{J}^{\star}$ is now not possible since its implicit dependence on $\mu_{J}$. We hence use an iterative process by which, starting from an initial guess very close to $\varphi(s_{J}^{\star})$, the value of $\bar{\varphi}_{J}^{\star}$ is increased until the corresponding value of $\bar{f}$, evaluated as
\begin{equation}\label{eq:Extimation_fbar}
	\bar{f} = (c^{\star}_{J} - \bar{c}^{\star}_{J})^{-p_{J}} 
	= \left( \frac{\sqrt{\bar{\varphi}_{J}^{\star}} - \sqrt{\varphi(s_{J}^{\star})}}{\sqrt{\mu_{J}}} \right)^{-p_{J}} ,
\end{equation}
does not fall into an interval $[\bar{f}_{min}, \bar{f}_{max}]$ which is a priori selected, for instance $[1, 10]$. To this aim a target value $\bar{f}_{tar} \in [\bar{f}_{min}, \bar{f}_{max}]$  can be established and  the new attempted value for $\bar{\varphi}_{J}^{\star}$ is obtained after replacing $\bar{f}$ with $\bar{f}_{tar}$ in (\ref{eq:Extimation_fbar}). In our experiments we have observed the convergence of this procedure in very few (usually 1 or 2) iterations.

\subsection{Selection of the region in which to invert the LT}

After evaluating parameters $\mu_{j}$, $h_j$ and $N_j$ in each subregion $R_j$, we select the region involving the minimum number $N_j$ of quadrature nodes to actually perform the numerical inversion of the LT with the minimum computational effort.

%Once the optimal values $\mu_{j}$, $h_j$ and $N_j$ have been evaluated in each subregion $R_j$, it is possible to select the region in which to actually perform the numerical inversion of the LT by choosing the region involving the minimum number $N_j$ of quadrature nodes with the aim of minimizing the computational effort.

%The number of quadrature nodes can in same cases turn out to be very large and hence the effort for determining the optimal contour and the optimal quadrature parameters contributes in a substantial way to keep the overall computational effort at a reasonably low level. 

%{\bf Questa parte è tutta da rivedere!} 

Because of the presence of the factor $e^{\mu t}$, with large values of $t$ and/or $\mu$ it is possible the presence in the summation $I_{h,N}$ of terms with large magnitude and terms with small magnitude; the effects of this simultaneous presence are in numerical cancellation which can become catastrophic. 

As already observed in \cite{Weideman2010}, the rounding error is roughly $RE \approx e^{\mu t}\epsilon$, with $\epsilon$ the machine precision. To keep rounding errors below the desired accuracy $\varepsilon > \epsilon$ it is therefore necessary that $\mu_{j} < (\log \varepsilon - \log \epsilon)/t$ and, hence, from (\ref{eq:muj_BoundedRegion}) it is sufficient to verify 

\[
	\sqrt{\bar{\varphi}_{j}^{\star}} + \sqrt{\bar{\varphi}_{j+1}^{\star}} 
	< 2 \sqrt{ (\log \varepsilon - \log \epsilon)/t} .
\]

In regions with $\varphi(s_{j}^{\star}) > (\log \varepsilon - \log \epsilon)/t$ this condition cannot be fulfilled; in order prevent round--off errors from destroying all of the significant digits, such regions must be discarded and the computation moved to one of the remaining regions. In the other cases the above equation provides a bound for $\bar{\varphi}_{j+1}^{\star}$. 

%In some regions this condition cannot be fulfilled: this is the case when $\varphi(s_{j}^{\star}) > (\log \varepsilon - \log \epsilon)/t$. When this is the case, we have no hope of achieving the desired accuracy since round-off errors will destroy all the significant digits; the corresponding region must therefore be discarded and the computation performed in one of the other regions. In the other cases the above equation provides a bound to be imposed for $\bar{\varphi}_{j+1}^{\star}$.

Another possible source for numerical cancellation is the closeness of the contour to one of the singularities on the boundary of the region $R_{j}$. We observe however that, despite the previous case in which the accuracy is affected by a factor proportional to $e^{\mu t}\epsilon$, in this case the accuracy is affected only in an algebraic way and, as observed by means of numerical experiments, it is sufficient to select $\bar{\varphi}_{j}^{\star}$ and $\bar{\varphi}_{j+1}^{\star}$ as previously described in order to avoid the cancellation.

In the last region $R_{J}$ it is possible, even when $\varphi(s_{J}^{\star}) > (\log \varepsilon - \log \epsilon)/t$, that the value $\mu_{J}$ resulting from the balancing of the various components of the error is too large and the round-off error $RE \approx e^{\mu_{J}t} \epsilon$ exceeds the required tolerance $\varepsilon$. Since in this case $RE$ dominates the discretization error $DE_{-}$ \cite{Weideman2010}, it is necessary to replace in (\ref{eq:BalanceUnbounded}) the exponential factor of $DE_{-}$ with that of $RE$; by solving explicitly with respect to $\mu_{J}$, $h_{J}$ and $N_{J}$ we derive in this case
\[
	\mu_{J} = \frac{1}{t} \bigl(\log \varepsilon - \log \epsilon \bigr) 
	, \quad
	N_{J} = \frac{ \log \varepsilon \sqrt{-\log \epsilon}}{2 \pi \bigl( \sqrt{\bar{\varphi}^{\star}_{J} t} - \sqrt{\log \varepsilon - \log \epsilon} \bigr)}
	, \quad
	h_{J} = \frac{1}{N_{J}} \sqrt{ \frac{\log \epsilon}{\log \epsilon - \log \varepsilon} }
	.
\]

The introduction of the round-off error in the error analysis prevents from placing the contour in a place in which it is not possible to guarantee that round--off errors do not exceed the assigned tolerance $\varepsilon$. Since now an explicit value of $\mu_{J}$ is available, the computation of $\bar{\varphi}^{\star}_{J}$ follows directly from (\ref{eq:Extimation_fbar}) as 
\[
	\bar{\varphi}^{\star}_{J} = \left( \bar{f}^{-\frac{1}{p_{J}}} \sqrt{\mu_{J}} + \sqrt{\varphi(s_{j}^{\star})}  \right)^2 
\]
(obviously, the region $R_{J}$ must be discarded when $\bar{\varphi}^{\star}_{J}$ exceeds the threshold $(\log \varepsilon - \log \epsilon)/t$ since even the accuracy $\bar{f} \varepsilon$ cannot be achieved). 

%Under the obvious assumption $\epsilon < \varepsilon < 1$, to obtain positivity of $N_{j}$, and also $\mu_{j} > \bar{\varphi}^{\star}_{j} ... \varphi(s^{\star}_{j}) ????$, it is necessary that $\bar{\varphi}^{\star}_{j} t ... \varphi(s^{\star}_{j}) t > \log \varepsilon - \log \epsilon$ (for instance, when a target tolerance $\varepsilon=10^{-15}$ is requested in standard double precision, i.e. $\epsilon \approx 2.2204\times 10^{-16}$, this threshold value is $ \approx 1.50488$). Under this threshold value the round--off errors are too big and not allow to achieve the requested tolerance $\varepsilon$.

Since rounding errors depend, in an exponential way, on the value of $t$, it can be useful to scale the ML function in order to force $t$ to assume small values. By simple algebraic manipulations, it is indeed immediate to see that for any $\tau > 0$ 
\[
	e_{\alpha,\beta}(t;\lambda) = \tau^{\beta-1} e_{\alpha,\beta}\left(\frac{t}{\tau};\tau^{\alpha}\lambda\right) 
\]
and it is therefore possible to reduce the propagation of rounding errors by suitably using the above scaling, for instance with $\tau\approx t
$.

\subsection{Extension to three parameters}\label{SS:Prabhakar}

The main information used by the OPC method is the location and the strength of the singularities of the LT; its extension to the 3 parameter ML function (\ref{eq:PrabhakarFunction}) is therefore straightforward. The LT of the corresponding generalization $e_{\alpha,\beta}^{\gamma} (t;\lambda) = t^{\beta-1} E_{\alpha,\beta}^{\gamma} (t^{\alpha} \lambda)$ is indeed
\[
	{\cal E}^{\gamma}_{\alpha,\beta}(s;\lambda) = \frac{s^{\alpha\gamma-\beta}}{(s^{\alpha}-\lambda)^{\gamma}}
	, \quad \Re(s) > 0 \textrm{ and } |\lambda s^{-\alpha}| < 1 ,
\]
which has the same singularities of the 2 parameter counterpart. It is elementary to reformulate Proposition \ref{prop:Mcpuls1} by replacing $\alpha$ with $\alpha \gamma$ and evaluate the new bounds of Propositions \ref{prop:Mcpuls2} and \ref{prop:Mdpuls} respectively as $M_{+}(c_{j}) < \bar{M}_{+} \cdot ( c_{j}^{\star} - c_{j})^{-\gamma}$ and $M_{-}(d_{j}) < \bar{M}_{-} \cdot ( d_{j}^{\star} - d_{j})^{-\gamma}$.

With $\gamma\not=1$ we must restrict the computation to $0<\alpha<1$ and $|\Arg(\lambda)| > \alpha \pi$ since otherwise non trivial difficulties (whose discussion is beyond the scope of the present paper) arise due to more involved branch--cuts; the case $0<\alpha<1$ and $\lambda$ real and negative is however the most interesting for applications \cite{OliveiraMainardiVaz2011}.

\section{Numerical experiments}\label{S:Numerical}

To test the proposed method and verify its computational efficiency we present in this Section some numerical experiments.

All the experiments are performed in Matlab, version 7.9.0.529, on an Intel Dual Core E5400 processor running at 2.70 GHz under the Windows XP operating system; the Matlab code implementing the OPC method and described in the previous sections is made available at \cite{Garrappa2014ml}. As reference we use the values evaluated after summing the series (\ref{eq:ClassicalML}) or (\ref{eq:PrabhakarFunction}) in variable precision arithmetic with 100 digits by means of Maple.

In all the experiments we set the target tolerance $\varepsilon = 10^{-15}$; the goal is to test whether it is possible to provide an approximation $\tilde{E}_{\alpha,\beta}^{\gamma}(z)$ of the ML function $E_{\alpha,\beta}^{\gamma}(z)$ with an accuracy very close to the machine precision. The tolerance $\varepsilon$ represents the absolute error in the computation of the integral in (\ref{eq:GeneralizedMLInverseLaplace2}) and this is the error we expect when the value of the function is not large in modulus (in this case no residue calculation is usually involved); otherwise, the summation of residues can dominate the integral in (\ref{eq:GeneralizedMLInverseLaplace2}) by several orders of magnitude and the leading error is the round--off error in the computation of residues: in the double precision used by Matlab it involves a relative error smaller than  $\varepsilon = 10^{-15}$. The resulting error is therefore a combination of absolute (with small values of $E_{\alpha,\beta}^{\gamma}(z)$) and relative (for large values of $E_{\alpha,\beta}^{\gamma}(z)$) errors and it can be represented as %$\bigl|E_{\alpha,\beta}^{\gamma}(z) - \tilde{E}_{\alpha,\beta}^{\gamma}(z) \bigr|/\bigl( 1 + \bigl|E_{\alpha,\beta}^{\gamma}(z) \bigr)| \bigr) \approx 10^{-15}$

\begin{equation}\label{eq:MixedError}
	\frac{\bigl|E_{\alpha,\beta}^{\gamma}(z) - \tilde{E}_{\alpha,\beta}^{\gamma}(z) \bigr|}
			 {1 + \bigl|E_{\alpha,\beta}^{\gamma}(z) \bigr)|} \le 10^{-15}
\end{equation}

In Figure \ref{fig:Fig_Error_OPC_alpha070_beta100_gama100_ang314} we report the error (\ref{eq:MixedError}) for the 2 parameter function $E_{\alpha,\beta}(z)$, for $\alpha=0.7$ and $\beta = 1.0$, evaluated in several points $z$ on the real negative axis. As we can clearly see, the OPC method achieves an accuracy very close to or smaller than the requested tolerance of $10^{-15}$  (the few gaps in the error plot are due to the fact that in some cases the approximated and reference values are exactly the same).

\begin{figure}[ht]
	\centering
	\includegraphics[width=0.65\textwidth]{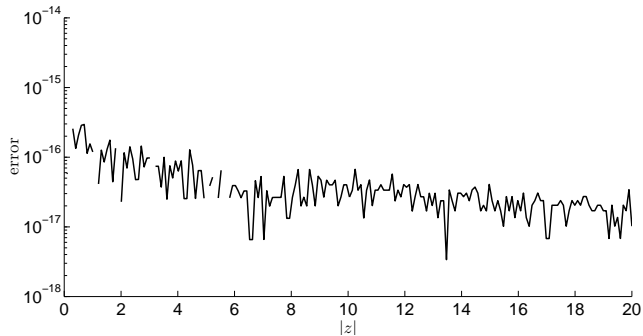} 
	\caption{Error for $E_{\alpha,\beta}(z)$ with $\alpha=0.7$, $\beta=1.0$ and $\arg(z)=\pi$.}
	\label{fig:Fig_Error_OPC_alpha070_beta100_gama100_ang314}
\end{figure}
 
To show the efficiency of the proposed method we present in Figure \ref{fig: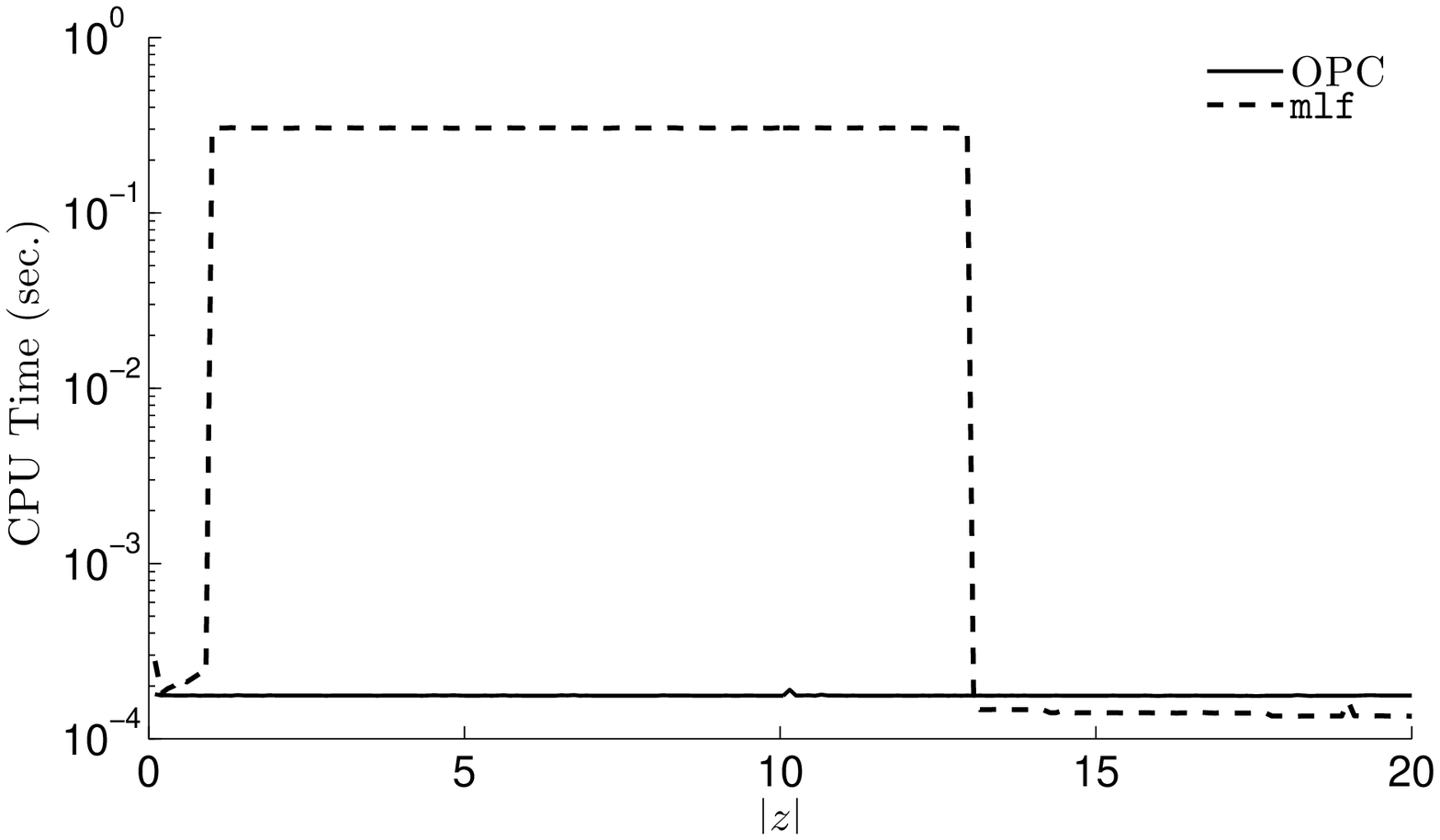} the computational time and we compare it with that of the Matlab {\tt mlf} code \cite{PodlubnyKacenak2012}. This is so far the unique available Matlab code for the ML function and, since it is widely used, it can be considered as a sort of benchmark for testing new methods. 

We observe that whilst the CPU time consumed by OPC remains nearly constant, the {\tt mlf} code demands for a CPU time close or slightly less than OPC for very small and large values of $|z|$ whilst for moderate values of $|z|$ the CPU time of {\tt mlf} is some order of magnitude higher than OPC.

\begin{figure}[ht]
	\centering
	\includegraphics[width=0.65\textwidth]{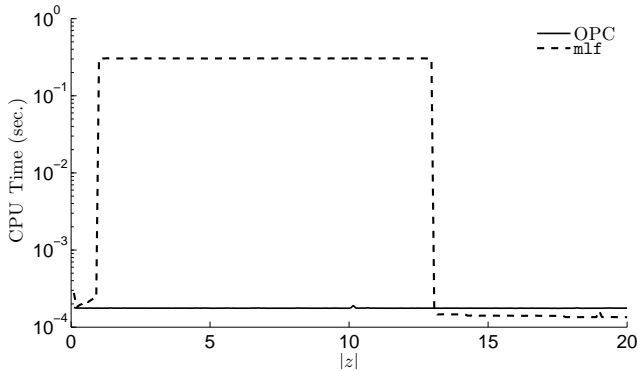} 
	\caption{Computation time for $E_{\alpha,\beta}(z)$ with $\alpha=0.7$, $\beta=1.0$ and $\arg(z)=\pi$.}
	\label{fig:Fig_Time_opc_mlf_alpha070_beta100_gama100_ang314.eps}
\end{figure}

This nonuniform behavior can be explained by observing that {\tt mlf} uses different techniques according to the value of $|z|$: for very small $|z|$ the series (\ref{eq:ClassicalML}) is evaluated until numerical convergence and this computation is quite fast; an asymptotic expansion is instead used when $|z|$ is large and the computation becomes faster and faster as $|z|$ grows; for intermediate values of $|z|$ a Romberg integration is applied to an integral representation of the ML function, with a computational cost proportional to $2^p$ whenever an accuracy $\varepsilon = 10^{-p}$ is requested. On the other hand, most of the computation of OPC is spent by the trapezoidal rule whose cost depends essentially on the number of nodes which is kept at the minimum by the algorithm (and it is roughly proportional to $p$ for any argument $z$); the amount of computation required by the other tasks of OPC, such as location of the singularities, choice of the suitable region and evaluation of the quadrature parameters, is usually negligible.

The plot in Figure \ref{fig:Fig_Error_OPC_alpha050_beta100_gama100_ang157} shows that the OPC algorithm behaves in a robust way and provides results within the requested tolerance also for complex values on the imaginary axis (we used here $\alpha=0.5$, $\beta=1.0$ for which it is known that {\tt mlf} does not provide accurate results).

%The test concerning the accuracy is repeated for complex values on the positive imaginary axis and for $\alpha=0.5$, $\beta=1.0$. Also in this case the plot in Figure \ref{fig:Fig_Error_OPC_mlf_alpha050_beta100_gama100_ang157} show that the OPC algorithm behaves in a robust way and it is able to provide results within the requested tolerance (it is known that in this cases {\tt mlf} does not provide accurate results).

\begin{figure}[ht]
	\centering
	\includegraphics[width=0.65\textwidth]{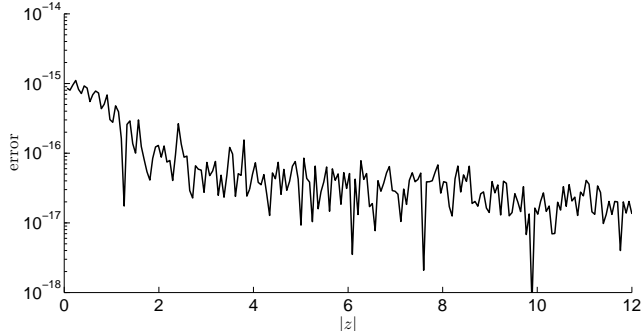} 
	\caption{Error for $E_{\alpha,\beta}(z)$ with $\alpha=0.5$, $\beta=1.0$ and $\arg(z)=\frac{\pi}{2}$.}
	\label{fig:Fig_Error_OPC_alpha050_beta100_gama100_ang157}
\end{figure}

We conclude our experiments by presenting the errors for the three parameter function $E_{\alpha,\beta}^{\gamma}(z)$ for $\alpha=0.6$, $\beta=0.9$, $\gamma=1.2$ and $\arg(z)=\frac{3\pi}{4}$. As we can see from Figure \ref{fig:Fig_Error_OPC_alpha060_beta090_gama120_ang236}, OPC behaves in a satisfactory way and produces errors very close to the target tolerance also for $E_{\alpha,\beta}^{\gamma}(z)$. 

\begin{figure}[ht]
	\centering
	\includegraphics[width=0.65\textwidth]{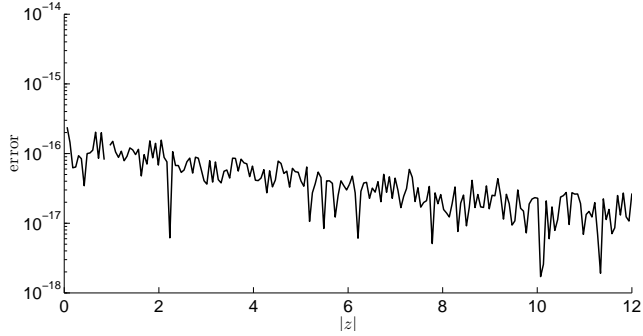} 
	\caption{Error for $E_{\alpha,\beta}^{\gamma}(z)$ with $\alpha=0.6$, $\beta=0.9$, $\gamma=1.2$ and $\arg(z)=\frac{3\pi}{4}$.}
	\label{fig:Fig_Error_OPC_alpha060_beta090_gama120_ang236}
\end{figure}

We do not report the CPU time for $E_{\alpha,\beta}^{\gamma}(z)$ since it would not provide any further information; as discussed in the Subsection \ref{SS:Prabhakar}, the evaluation of the three parameter function just involves different coefficients in the error estimations and most of the computation (and hence the CPU time) is the same as in the two parameter case. %A more involved problem, such as the computation of (\ref{eq:PrabhakarFunction}), can be therefore solved with the same computational effort of the simpler problem (\ref{eq:ClassicalML}).

\section{Concluding remarks}\label{S:Conclusion}

In this work we have presented the OPC method for the evaluation of the two parameter ML function, a function which plays a fundamental role in fractional calculus. The OPC method allows to evaluate the ML function with high accuracy and numerical experiments have shown its computational efficiency. The generalization to the three parameter ML function has been  discussed and tested too. The corresponding Matlab code is made freely available \cite{Garrappa2014ml}.

\section*{Acknowledgments}

The author is extremely grateful to the anonymous referees for their insightful and constructive remarks which allowed to improve the paper in a remarkable way.

\bibliographystyle{siam}
\bibliography{ML3_Biblio}

\end{document}